\documentclass[a4paper,12pt]{article}
\usepackage[top=3cm,bottom=2cm,left=2cm,right=2cm]{geometry}
\usepackage{graphics}
\usepackage{graphicx}
\usepackage{amssymb}
\usepackage{amsmath}
\usepackage{amsthm}
\usepackage{dsfont}
\usepackage{marvosym}
\usepackage{multicol}
\usepackage{wasysym}
\usepackage[english]{babel}
\usepackage{cmap}
\usepackage[utf8]{inputenc}
\usepackage[T1]{fontenc}
\usepackage{hrlatex}
\usepackage{setspace}
\newtheorem{theorem}{Theorem}
\newcommand{\Q}{\mathbb{Q}}

\usepackage{setspace}
\makeatletter
 
  \@addtoreset{equation}{section}
\makeatother
\makeatletter
\def\tbcaption{\def\@captype{table}\caption}
\makeatother
\newtheorem{thm}{Theorem}[section]

\newtheorem{lem}[thm]{Lemma}
\newtheorem{prop}[thm]{Proposition}

\newtheorem{conj}[thm]{Conjecture}
\newtheorem{ass}[thm]{Assumption}

\newcommand{\beq}[1]{\begin{equation}\label{#1}}
\newcommand{\eeq}{\end{equation}}

\newcommand{\bZ}{\ensuremath{\mathbb{Z}}}

\begin{document}
\title{The extension of the $D(-k)$-pair $\{k,k+1\}$ to a quadruple}

\author{Nikola Ad{\v z}aga, Alan Filipin and Yasutsugu Fujita}
\date{\today}
\maketitle
\begin{abstract}
Let $k$ be a positive integer. 
In this paper, we prove that if $\{k,k+1,c,d\}$ is a $D(-k)$-quadruple with $c>1$, then $d=1$. 
\end{abstract}
\noindent
\textrm{2010} \textit{Mathematics Subject Classification}: 11D09, 11B37, 11J68, 11J86\\
{\it Keywords}: Diophantine $m$-tuples, Pellian equations, hypergeometric method, linear forms in logarithms

\section{Introduction}\label{intr}

Let $n\neq0$ be an integer. A set of $m$ positive integers $\{a_1,a_2,\ldots,a_m\}$ is called a $D(n)$-$m$-tuple (or a Diophantine $m$-tuple with the property $D(n)$), if $a_ia_j + n$ is a perfect square for all $1\leq i < j \leq m$. 
A natural question regarding such sets is about their possible sizes. 
If $n\equiv2\pmod{4}$, considering congruences modulo 4, it is easy to prove that there does not exist a $D(n)$-quadruple (see for example \cite{Br}, \cite{GS}). 
On the other hand, Dujella \cite{daa} proved that if an integer $n$ does not have the form $4k + 2$ and $n\not \in \{-4, -3, -1, 3, 5, 8, 12, 20\}$, then there exists at least one $D(n)$-quadruple. 
The conjecture is that if $n \in \{-4, -3, -1, 3, 5, 8, 12, 20\}$, then there does not exist a $D(n)$-quadruple. 
In the case $n=-1$, it was proven by Dujella et al.~\cite{DFF} that there does not exist a $D(-1)$-quintuple and that there are only finitely many $D(-1)$-quadruples. 
Also, bounds for the number of $D(-1)$-quadruples have been significantly improved during the years.
But they are still too large to solve the conjecture of the non-existence of $D(-1)$-quadruples completely. 
Furthermore, the most well-known and studied case especially in recent years is when $n=1$, where very recently He, Togb\'e and Ziegler \cite{HTZ} proved the folklore conjecture saying that there does not exist a $D(1)$-quintuple. 
Their result is of great importance because they have introduced some new techniques and ideas. 
However, in the case $n=1$, there is an even stronger conjecture stating that any $D(1)$-triple can be extended to a quadruple with a larger element in a unique way. 
That conjecture is still open, and many mathematicians are working on it. 
For general $n$, if we denote $M_n = \sup\{|S|\}$, where set $S$ has the property $D(n)$, Dujella \cite{dmp}, \cite{dglasnik} proved that $M_n\leq 31$ for $|n|\leq 400$, and $M_n < 15.476 \log|n|$ for $|n| > 400$. 
The whole history of the problem, with recent results and progress, can be found at \cite{web}. \\

In this paper we are interested in the problem of extending a $D(-k)$-pair $\{k,k+1\}$ for a positive integer $k$. 
There are already some results in that direction in the case where $k=K^2$ for a positive integer $K$. 
The third author \cite{Fk2} showed that if $\{K^2, K^2+1, 4K^2+1,d\}$ is a $D(-K^2)$-quadruple, then $d=1$. 
Moreover, the third author and Togb\' e \cite {FT2012} proved, in an elementary and relatively simple manner, that if $\{K^2,K^2 +1,c,d\}$ is a $D(-K^2)$-quadruple with $c < d$, then $c = 1$ and $d = 4K^2 + 1$ (in that case, $3K^2 + 1$ must be a square). 
Similarly, with a few new ideas, the first two authors \cite{AF} have proven that if $\{2K^2,2K^2 + 1,c,d\}$ is a $D(-2K^2)$-quadruple with $c<d$, then $c=1$ and $d=8K^2+1$ (in which case $6K^2+1$ should be a square). 
Our motivation for this paper is to generalize those results for any positive integer $k$. 
However, that problem does not seem so straightforward in general. 
Thus, our main result is the following Theorem:

\begin{thm}\label{thm:main}
Let $k$ be a positive integer.
If $\{k,k+1,c,d\}$ is a $D(-k)$-quadruple with $c>1$, then $d=1$.
\end{thm}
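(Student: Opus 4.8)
The plan is to argue by contradiction. Assuming the hypotheses $c>1$, suppose in addition that $d\neq 1$; since $d$ is a positive integer distinct from $k,k+1,c$, this means $d>1$ as well, so the pair $\{k,k+1\}$ would be extended by two elements both exceeding $1$, and after relabelling we may take $1<c<d$. Writing out the three defining conditions that involve the largest element $d$, namely that $kd-k$, $(k+1)d-k$ and $cd-k$ are perfect squares, I would set
\begin{equation}
kd-k=x^2,\qquad (k+1)d-k=y^2,\qquad cd-k=z^2 ,
\end{equation}
and eliminate $d$ pairwise. Combining the first two conditions yields the Pellian equation $ky^2-(k+1)x^2=k$, whose automorphisms come from $u^2-k(k+1)v^2=1$ with fundamental solution $(u,v)=(2k+1,2)$; combining the first and third yields a second Pellian equation $kz^2-cx^2=k(c-k)$, with automorphisms coming from $u^2-kc\,v^2=1$. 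Both equations share the quantity $x=\sqrt{k(d-1)}$, so the same integer $x$ must appear in the solution set of each.

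After determining the fundamental solutions of each equation (the initial terms of the recurrences being controlled by the classical bound on fundamental solutions of a Pell-type equation), their integer solutions organize into finitely many binary recurrence sequences, and the shared value of $x$ turns the whole problem into a coincidence $v_m=w_n$ between terms of two such sequences; the trivial solution $x=0$ corresponds exactly to the excluded value $d=1$. I would then carry out the standard three-part analysis. First, a congruence and gap argument: examining $v_m$ and $w_n$ modulo suitable powers of $k$, and comparing their growth rates, forces any nontrivial coincidence to have large indices, pins the two indices close together, and delivers a lower bound for $d$ in terms of $c$ and $k$. Second, rewriting $v_m=w_n$ as a small linear form $\Lambda=m\log\alpha-n\log\beta+\log(\cdots)$ in the logarithms of the two fundamental units and applying a Baker--Matveev lower bound for $|\Lambda|$ produces an upper bound for the indices, hence for $\log d$, that is explicit in $k$ and $c$.

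The delicate point is that the linear-forms upper bound alone is too weak to contradict the congruence lower bound uniformly in the free parameter $k$. To sharpen it I would invoke the hypergeometric (Pad\'e approximation) method, applying a Rickert-type simultaneous-approximation estimate for the algebraic numbers $\sqrt{k(k+1)}$ and $\sqrt{kc}$ (equivalently, for the binomial functions $(1-t)^{1/2}$ near $t=0$); this gives a second, effective upper bound that is strong precisely when $c$ is not too large relative to $k$. I expect the main obstacle to be reconciling these bounds across the entire range of $c$: the hypergeometric estimate dominates for small-to-moderate $c/k$, while the congruence and linear-forms bounds govern the complementary range, so the heart of the proof is a careful case split on the size of $c$ against $k$ (and of $d$ against $c$) arranged so that in every range one of the two upper bounds beats the lower bound. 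The finitely many residual small values of $k$ that evade the asymptotic argument would be cleared by a Baker--Davenport--type reduction (continued fractions, or LLL applied to the linear form), which closes every case and forces the contradiction, yielding $d=1$.
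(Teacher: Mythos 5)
Your overall architecture (reduce to $1<c<d$, derive the system of Pellian equations sharing $x$, turn the problem into a coincidence $v_m=w_n$, then play lower bounds against hypergeometric and Baker-type upper bounds) matches the paper's, but two of your load-bearing steps would fail as described. First, your lower bound. You propose to get a lower bound for the indices "in terms of $c$ and $k$" by examining $v_m$ and $w_n$ modulo suitable powers of $k$; this is exactly the classical congruence method of \cite{Dk}, \cite{DP}, \cite{Fk2}, and the paper states explicitly that it does not seem to work for general $k$ here (it succeeds only in the special cases $k=K^2$, $2K^2$ treated in earlier work). The paper's replacement --- and its main structural novelty --- is to exploit the fact that the recurrences attached to $c$ and to $d$ are literally the same sequence, combined with a minimality assumption on $c$ forcing the fundamental solution to be $x_0=0$; this yields only \emph{absolute} lower bounds ($n\ge 8$ or $n\ge 9$, Proposition \ref{prop:n>}), valid moreover only for $\nu\ge 7$. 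Without a $k$-dependent lower bound you cannot run the uniform comparison you sketch, and you need a substitute idea.

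Second, and as a direct consequence, the residual cases are not ``finitely many small values of $k$'' clearable by Baker--Davenport reduction. Because the lower bound is weak, what survives the asymptotic argument includes the \emph{infinite} families $c=c_\nu$ with $1\le\nu\le 6$ and all $k\ge 7$. These are disposed of by a completely different mechanism: the minimality argument forces $c-k$ to be a perfect square, which for each fixed $\nu$ is a Diophantine condition on $k$ --- an elliptic curve for $\nu\in\{2,3,4,5\}$ (after factoring the relevant polynomial in $k$), and for $\nu=6$ a genus-$2$ hyperelliptic curve on which Chabauty's method fails and whose integral points must be found via Gallegos-Ruiz's hyperelliptic-logarithm reduction. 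No continued-fraction or LLL reduction on a linear form in two logarithms can substitute for this, since there is no bounded parameter left to reduce over. Your proposal therefore has the right toolbox for the $\nu\ge 7$ and fixed-$(k,c)$ regimes but is missing both the idea that actually produces a usable lower bound and the entire elliptic/hyperelliptic-curve stage that closes the small-$\nu$ families.
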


The starting point for the proof of Theorem \ref{thm:main} is the fact that the problem can be reduced to solving the system of Pell (or Pell-like) equations, even in the case where $k$ is non-square. 
While the hypergeometric method (see Theorem \ref{thm:R}) is the main tool to get upper bounds for solutions, which is standard in this area of research, we need a twist to get lower bounds for solutions. 
In fact, it seems hard to get a lower bound for solutions in terms of $k$ by using ``the congruence method'' as in \cite{Dk}, \cite{DP}, \cite{Fk2}. 
Instead, we use the property that the sequences $\{s_{\nu}\}$ and $\{v_{\nu}\}$ attached to $c$ and $d$, respectively, are exactly the same (see \eqref{seq:s} and \eqref{seq:v}), to obtain absolute lower bounds for solutions (see Proposition \ref{prop:n>}). 
Since they are weaker than the ones in terms of $k$, several cases remain to be shown. 
Most of the cases can be done by applying elementary considerations (see Section \ref{sec:k}), using the standard methods, that is, Baker's method on a linear form in logarithms and the reduction method (see Subsections \ref{subsec:78} and \ref{subsec:1}), 
or finding the integral points on certain elliptic curves (see Subsection \ref{subsec:2-5}). 

However, there is a case where we have to find the integral points on a certain hyperelliptic curve of genus $2$ (see Subsection \ref{subsec:6}). 
This task is not easy at all, since Chabauty's method cannot be applied to this curve, 
but we could complete it by applying the strategy due to Gallegos-Ruiz (see \cite{Gall}, \cite{PhD}). 
Such an application is new in this research field, 
and is expected to be utilized in future work. 


We also believe that the following Conjecture is valid, which will be considered in our future research. But as we mentioned, that problem is more difficult for general $k$, than those considered in \cite{AF}, \cite{FT2012}.
\begin{conj}
Let $k$ be a positive integer. If $\{k,k +1,c,d\}$ is a $D(-k)$-quadruple, for positive integer $k$, with $c < d$, then $c = 1$ and $d = 4k + 1$, in which case $3k+1$ must be a square.
\end{conj}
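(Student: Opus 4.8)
The plan is to convert the quadruple condition into a system of simultaneous Pellian equations and then to squeeze the admissible solutions between an upper bound coming from Diophantine approximation and an absolute lower bound coming from the recurrence structure. Suppose, for contradiction, that $\{k,k+1,c,d\}$ is a $D(-k)$-quadruple with $1<c<d$; deriving a contradiction forces $d=1$. The defining property produces nonnegative integers with $k(d-1)=x^2$, $(k+1)d-k=y^2$ and $cd-k=z^2$. Eliminating $d$ from the first two relations gives $ky^2-(k+1)x^2=k$, and eliminating it from the first and third gives $kz^2-cx^2=k(c-k)$; the integer solutions of each Pellian equation fall into finitely many binary recurrence sequences. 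The structural point I would exploit is that, for $c$, the quantities $k(c-1)$ and $(k+1)c-k$ are also perfect squares, so $c$ is generated by the very same equation $ky^2-(k+1)x^2=k$ as $d$. Thus both $c$ and $d$ are terms of the same recurrence attached to the pair $\{k,k+1\}$ --- precisely the coincidence of $\{s_\nu\}$ and $\{v_\nu\}$ --- and the theorem reduces to showing that two distinct terms $1<c<d$ of one such sequence cannot also make $cd-k$ a perfect square.

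For the upper bound I would feed this system to the hypergeometric method of Theorem \ref{thm:R}. It yields an effective irrationality measure for the square roots $\sqrt{(k+1)/k}$ and $\sqrt{c/k}$ that govern the two equations, and this converts into an explicit upper bound on the index $\nu$ of the common solution in terms of $\log d$, hence of $k$. For the lower bound I would use the twist already announced: since $c$ and $d$ are two terms of one sequence with $1<c<d$, the index of $d$ cannot be small, which produces the \emph{absolute} lower bound of Proposition \ref{prop:n>}, independent of $k$.

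Comparing the two bounds does not close the argument at once, because the lower bound is absolute whereas the upper bound grows with $k$; the two are compatible only for a bounded range of indices $\nu$ together with a bounded range of $k$. First I would dispose of the bulk of these residual configurations by elementary size, parity and congruence arguments on the recurrences, as in Section \ref{sec:k}. The borderline indices that survive I would attack with Baker's theory of linear forms in logarithms, cutting the resulting enormous bounds down to a checkable range by Baker--Davenport reduction (Subsections \ref{subsec:78} and \ref{subsec:1}). Several of the remaining families, after substitution, become Weierstrass equations, and I would finish those by computing all integral points on the associated elliptic curves (Subsection \ref{subsec:2-5}).

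The hard part, I expect, is a single surviving index for which the requirement that $cd-k$ be a perfect square, combined with the closed forms of $c$ and $d$ as recurrence terms, yields not an elliptic curve but a hyperelliptic curve of genus $2$ (the situation of Subsection \ref{subsec:6}). Chabauty's method is unavailable here, typically because the Mordell--Weil rank of its Jacobian is at least the genus, so the obstacle is to determine all of the integral points directly. My plan would be to follow the strategy of Gallegos-Ruiz \cite{Gall,PhD}: bound the integral points via linear forms in logarithms tailored to the curve, and then run a lattice-reduction sieve to enumerate them explicitly, verifying that none gives an admissible nontrivial $d$. This step is the genuine crux; once it is settled the contradiction with the assumption $1<c<d$ is complete, and $d=1$ follows.
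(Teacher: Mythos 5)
There is a genuine gap, and it is structural rather than technical: you have written a proof sketch for the wrong statement. What you outline --- assume $1<c<d$, reduce to the system of Pellian equations, squeeze the indices between the hypergeometric upper bound and the absolute lower bound from the coincidence of the sequences $\{s_\nu\}$ and $\{v_\nu\}$, and mop up the residual cases with Baker--Davenport reduction, elliptic curves, and the genus-$2$ curve via Gallegos-Ruiz --- is precisely the architecture of the paper's proof of Theorem \ref{thm:main}. But the statement you were asked to prove is the Conjecture, which the paper explicitly leaves \emph{open} (``which will be considered in our future research''). Deriving a contradiction from $1<c<d$ yields only $c=1$ (note also the slip at the end of your first paragraph: under the labelling $c<d$, the contradiction forces $c=1$, not $d=1$). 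The Conjecture asserts in addition that $d=4k+1$, i.e.\ that the $D(-k)$-triple $\{1,k,k+1\}$ admits \emph{no} extension other than $4k+1$. That is a separate Diophantine problem --- one must show that $d-k$, $kd-k$ and $(k+1)d-k$ are simultaneously squares only for $d=4k+1$ --- and nothing in your outline, nor in the paper, addresses it. The authors indicate that this remaining step is harder for general $k$ than the special cases $k=K^2$ and $k=2K^2$ treated in \cite{FT2012} and \cite{AF}, where the analogous uniqueness was obtained by congruence arguments specific to those parametrizations.

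So the honest assessment is: your proposal is a faithful reconstruction of the proof of Theorem \ref{thm:main}, but as a proof of the Conjecture it is missing its essential second half, and no amount of the machinery you list (Rickert-type approximation, Matveev's theorem, integral points on curves) is claimed by the paper to close that half. You should either restrict your claim to Theorem \ref{thm:main}, or supply a genuinely new argument for the uniqueness of the extension of $\{1,k,k+1\}$.
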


\section{Determination of fundamental solutions}\label{sec:fs}

Let $k$ be a positive integer and $\{k,k+1,c\}$ a $D(-k)$-triple with $c>1$.
Then, there exist positive integers $s'$ and $t$ such that
\begin{align}\label{s't}
  kc-k=(s')^2 \quad \text{and}\quad (k+1)c-k=t^2.
\end{align}
Expressing $k$ as $k=k_0k_1^2$, where $k_0$ and $k_1$ are  positive integers with $k_0$ square-free,
we may write $c-1=k_0s^2$ with some positive integer $s$,
which together with the latter equality of \eqref{s't} implies that
\begin{align}\label{eq:st}
t^2-(k_0^2k_1^2+k_0)s^2=1.
\end{align}
The positive solutions $(t,s)$ to this Pell equation can be expressed as
\[
  t+s\sqrt{k_0^2k_1^2+k_0}=\left(2k+1+2k_1\sqrt{k_0^2k_1^2+k_0}\right)^{\nu},
  \]
which enables us to write $s=s_{\nu}$, where
\begin{align}\label{seq:s}
s_0=0,\quad s_1=2k_1,\quad s_{\nu+2}=(4k+2)s_{\nu+1}-s_{\nu}.
\end{align}
According to $s=s_{\nu}$, we may write $t=t_{\nu}$ and $c=c_{\nu}$.
For the later reference, we list small values of $s_{\nu}$:
\begin{align*}
s_0&=0,\quad s_1=2k_1,\quad s_2=4k_1(2k+1),\\
s_3&=8k_1(2k+1)^2-2k_1,\quad s_4=16k_1(2k+1)^3-8k_1(2k+1),\\
s_5&=32k_1(2k+1)^4-24k_1(2k+1)^2+2k_1,\\
s_6&=64k_1(2k+1)^5-64k_1(2k+1)^3+12k_1(2k+1),\\
s_7&=128k_1(2k+1)^6-160k_1(2k+1)^4+48k_1(2k+1)^2-2k_1,\\
s_8&=256k_1(2k+1)^7-384k_1(2k+1)^5+160k_1(2k+1)^3-16k_1(2k+1),\\
s_9&=512k_1(2k+1)^8-896k_1(2k+1)^6+480k_1(2k+1)^4-80k_1(2k+1)^2+2k_1.
\end{align*}

Since $c_0=1$, if it is proved that there does not exist a $D(-k)$-quadruple $\{k,k+1,c,d\}$ with $1<c<d$, then it turns out that Theorem \ref{thm:main} is valid.
Thus, throughout this paper we assume on the contrary that $\{k,k+1,c,d\}$ is a $D(-k)$-quadruple with $c<d$.
Note that we may consider only the case where $k \ge 3$ in view of \cite{D12} and \cite{Br,GS,MR}.
Then, there exist positive integers $x,\,y,\,z$ such that
\[
  d-1=k_0x^2,\quad (k+1)d-k=y^2,\quad cd-k=z^2,
  \]
from which we obtain the following system of Pellian equations
\begin{align}
y^2-(k_0^2k_1^2+k_0)x^2&=1,\label{P:yx}\\
z^2-k_0cx^2&=c-k.\label{P:zx}
\end{align}
Just as $s=s_{\nu}$, from \eqref{P:yx} we may write $x=v_m$ with a non-negative integer $m$, where $\{v_m\}$ is the recurrence sequence defined by
\begin{align}\label{seq:v}
v_0=0,\quad v_1=2k_1,\quad v_{\nu+2}=(4k+2)v_{\nu+1}-v_{\nu}.
\end{align}
On the other hand, we see from Nagell's argument that for any positive solution $(z,x)$ to \eqref{P:zx} there exist a solution $(z_0,x_0)$ to \eqref{P:zx} satisfying
\begin{align}\label{in:zx}
0<z_0\le \sqrt{c(c-k)},\quad |x_0|<s
\end{align}
and a non-negative integer $n$ such that
\begin{align*}
z+x\sqrt{k_0c}=(z_0+x_0\sqrt{k_0c})(2k_0s^2+1+2s\sqrt{k_0c})^n.
\end{align*}
Thus, we may write $x=w_n$, where
\begin{align}\label{seq:w}
w_0=x_0,\quad w_1=(2k_0s^2+1)x_0+2sz_0,\quad w_{n+2}=(4k_0s^2+2)w_{n+1}-w_n.
\end{align}
Expressions \eqref{seq:s} and \eqref{seq:v} together show that
\[
  (v_m \mod s)_{m \ge 0}=(0,s_1,\dots,s_{\nu-1},0,-s_{\nu-1},\dots,-s_1,0,s_1,\dots),
  \]
which yields $v_m \equiv \pm s_i \pmod{s}$ for some $i$ with $0 \le i < \nu$.
Since we see from \eqref{seq:w} that $w_n \equiv x_0 \pmod{s}$, we have $x_0 \equiv \pm s_i \pmod s$.
It follows from \eqref{in:zx} that $x_0=\pm s_i$.\par
In what follows, we assume the following:
\begin{ass}\label{ass}
$\{k,k+1,c',c\}$ is not a $D(-k)$-quadruple for any $c'$ with $1<c'<c$.
\end{ass}
Then, putting $d_0:=k_0x_0^2+1$ we have
\begin{align*}
kd_0-k&=k_0^2k_1^2x_0^2,\\
(k+1)d_0-k&=(k+1)(k_0x^2+1)-k=k_0(k+1)s_i^2+1=t_i^2,\\
cd_0-k&=c(k_0x_0^2+1)-k=z_0^2,
\end{align*}
that is, $\{k,k+1,d_0,c\}$ is a $D(-k)$-quadruple.
Since $d_0<s^2+1 \le c$, from the assumption we obtain $d_0=1$, i.e., $x_0=0$ and $z_0=\sqrt{c-k}$.
Note that this occurs only if $c-k$ is a perfect square.
Hence, \eqref{seq:w} enables us to express $x=w_n$ as
\begin{align}\label{seq:w'}
w_0=0,\quad w_1=2s\sqrt{c-k},\quad w_{n+2}=(4c-2)w_{n+1}-w_n,
\end{align}
from which we obtain a lower bound for $x$ by $n$ and $c$.
\begin{lem}\label{lem:x>}
If $x=w_n$, then $\log x > (n-1) \log(4c-3)$.
\end{lem}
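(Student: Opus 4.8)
The plan is to bound the ratio of consecutive terms of the sequence $\{w_n\}$ from below by $4c-3$ and then telescope. Write $r_n := w_{n+1}/w_n$ for $n\ge 1$; the recurrence \eqref{seq:w'} gives the relation $r_{n+1} = (4c-2) - 1/r_n$. First I would settle the base case: since $w_0=0$, the recurrence yields $w_2=(4c-2)w_1$, so $r_1 = 4c-2 > 4c-3$. (Positivity of every $w_n$ for $n\ge 1$ follows simultaneously with the bound below, starting from $w_1>0$.)

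Next I would prove by induction that $r_n > 4c-3$ for every $n\ge 1$. Assuming $r_n > 4c-3$, and noting that $4c-3>1$ because $c>1$, the relation above gives
\[
 r_{n+1} = (4c-2) - \frac{1}{r_n} > (4c-2) - \frac{1}{4c-3}.
\]
The inequality $(4c-2) - 1/(4c-3) > 4c-3$ is equivalent to $4c-3>1$, which holds; this closes the induction, so $w_{n+1} > (4c-3)\,w_n$ for all $n\ge 1$.

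Telescoping then yields $w_n = w_1\, r_1 r_2\cdots r_{n-1} > (4c-3)^{n-1}\,w_1$ for $n\ge 2$, while $w_1=w_1$ for $n=1$. It remains to check $w_1>1$. Since $\{k,k+1,c\}$ with $c>1$ forces $\nu\ge 1$, we have $s=s_\nu\ge s_1=2k_1\ge 2$; and since $z_0=\sqrt{c-k}$ is a positive integer, $c-k\ge 1$. Hence $w_1 = 2s\sqrt{c-k}\ge 4 > 1$. Combining, $x=w_n > (4c-3)^{n-1}$ in all cases, and taking logarithms gives $\log x > (n-1)\log(4c-3)$.

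The argument is elementary, so there is no genuine obstacle; the only point requiring care is the verification that $w_1>1$, which leans on the facts $s\ge 2$ and $c>k$ established in the preceding discussion. An equivalent route would use the closed form $w_n = A(\alpha^n-\alpha^{-n})$ with $\alpha=(2c-1)+2\sqrt{c(c-1)}$, where one checks that $\alpha>4c-3$ is equivalent to $c>1$; but the correction term $\alpha^{-n}$ and the constant $A$ make the ratio induction the cleaner option.
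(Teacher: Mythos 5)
Your proposal is correct and is essentially the paper's argument: the paper's one-line proof $w_n>(4c-3)w_{n-1}>(4c-3)^{n-1}w_1>(4c-3)^{n-1}$ is exactly the ratio bound $w_{j+1}>(4c-3)w_j$ telescoped, which you establish more explicitly via the recursion $r_{n+1}=(4c-2)-1/r_n$, together with the observation $w_1=2s\sqrt{c-k}>1$. No substantive difference.
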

\begin{proof}
By \eqref{seq:w'} we have
\[
w_n>(4c-3)w_{n-1}>(4c-3)^{n-1}w_1>(4c-3)^{n-1}.
\]
\end{proof}
Moreover, since the recurrence sequence $\{v_m\}$ has the same form as $\{s_{\nu}\}$ and $v_m \equiv \pm s_i =x_0=0 \pmod{s}$, we have the following.
\begin{lem}\label{lem:mod}
If $x=v_m$, then $m \equiv 0 \pmod{\nu}$.
\end{lem}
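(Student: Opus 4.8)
The plan is to exploit the fact that $\{v_m\}$ and $\{s_\nu\}$ are one and the same sequence. Comparing \eqref{seq:s} and \eqref{seq:v}, both obey $u_{\nu+2}=(4k+2)u_{\nu+1}-u_\nu$ with the identical initial data $u_0=0$, $u_1=2k_1$, so $v_m=s_m$ for every $m\ge0$; recall too that $s=s_\nu$ by the definition of $\nu$. Hence the assertion $m\equiv0\pmod\nu$ amounts to showing that $s_m\equiv0\pmod{s_\nu}$ forces $\nu\mid m$, once we know that $v_m$ is divisible by $s$.

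That divisibility is already at hand: writing $x=w_n$ as in \eqref{seq:w} gives $w_n\equiv x_0\pmod s$, and under Assumption~\ref{ass} we have shown $x_0=0$, so $v_m=x\equiv0\pmod s$. It then remains only to identify the indices $m$ for which $v_m\equiv0\pmod s$, and here I would quote the periodic residue pattern $(v_m\bmod s)_{m\ge0}=(0,s_1,\dots,s_{\nu-1},0,-s_{\nu-1},\dots,-s_1,0,\dots)$ of period $2\nu$ established above. The one point deserving care is that within a single period the residue vanishes only at the indices $0$ and $\nu$: this is immediate from the strict monotonicity $0<s_1<\cdots<s_{\nu-1}<s_\nu=s$, which shows that none of $s_1,\dots,s_{\nu-1}$ is a multiple of $s$. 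Thus $v_m\equiv0\pmod s$ if and only if $\nu\mid m$, and combining this with $v_m\equiv0\pmod s$ gives $m\equiv0\pmod\nu$.

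If one prefers to make the residue pattern self-contained rather than citing it, I would re-derive it from the Lucas sequence $U_\nu:=s_\nu/(2k_1)$, which satisfies $U_0=0$, $U_1=1$ and the same recurrence. The addition formula $U_{\nu+j}=U_\nu U_{j+1}-U_{\nu-1}U_j$, together with $U_{-j}=-U_j$, yields $U_{\nu+j}\equiv-U_{\nu-j}\pmod{U_\nu}$, while the determinant identity $U_{\nu-1}U_{\nu+1}-U_\nu^2=-1$ gives $U_{\nu-1}^2\equiv1\pmod{U_\nu}$ and hence the period $2\nu$. The only genuine work is this sign bookkeeping and the monotonicity remark ruling out spurious zeros; there is no serious obstacle, since the lemma is essentially a restatement of the residue computation already in hand.
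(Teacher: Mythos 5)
Your argument is correct and is essentially the paper's own: the authors likewise deduce the lemma from the periodic residue pattern $(v_m \bmod s)_{m\ge 0}$, the congruence $w_n\equiv x_0\pmod{s}$, and the fact that Assumption~\ref{ass} forces $x_0=0$. Your additional remarks --- that the zeros within one period occur only at indices $0$ and $\nu$ because $0<s_1<\cdots<s_{\nu-1}<s_\nu=s$, and the optional Lucas-sequence derivation of the pattern --- merely make explicit details the paper leaves implicit.
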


\section{Lower bounds for solutions}\label{sec:lb}

Our goal in this section is to show the following.
\begin{prop}\label{prop:n>}
Assume that $v_m=w_n$ has a solution with $n \ne 0$.
On Assumption \ref{ass}, the following holds$:$
\begin{itemize}
\item[{\rm (1)}] If $\nu=7$, and $k \ge 12$, then $n \ge 8$.
\item[{\rm (2)}] If either $\nu=8$ and $k \ge 15$ or $\nu \ge 9$ and $k \ge 7$, then $n \ge 9$.
\end{itemize}
\end{prop}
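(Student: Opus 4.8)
The plan is to combine the two expressions for the solution $x$ that we now have in hand. Since $v_m=w_n$ is assumed to hold with $n\ne 0$, Lemma~\ref{lem:mod} tells us that $m\equiv 0\pmod{\nu}$, so in particular $m\ge \nu$ (the case $m=0$ giving $x=0=w_0$ is excluded by $n\ne 0$). The strategy is therefore to sandwich a single quantity, namely $x$, between a lower bound coming from the $v_m$-side and an upper bound coming from the $w_n$-side, and to read off an inequality on $n$ from the resulting comparison. Concretely, I would first establish a clean lower bound of the shape $v_m\ge v_\nu$ (using monotonicity of $\{v_m\}$) and estimate $v_\nu$ from below in terms of $k$ and $k_1$. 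Because the recurrence \eqref{seq:v} is the same as \eqref{seq:s}, the explicit polynomial expressions for $s_7,s_8,s_9,\dots$ listed in Section~\ref{sec:fs} transfer verbatim to $v_7,v_8,v_9,\dots$, giving me leading-order lower bounds of size roughly $2^{\nu-1}k_1(2k+1)^{\nu-1}$.

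On the other side I would bound $x=w_n$ from above. The recurrence \eqref{seq:w'} gives $w_n<(4c-2)^n$ (or a slightly sharper comparison via $w_n<(4c-2)w_{n-1}$), and I would pair this with the fact that $c$ itself is controlled by $\nu$: recalling $c-1=k_0s^2=k_0s_\nu^2$ and using the explicit value of $s_\nu$, the quantity $4c-2$ is bounded above by a known polynomial in $k$. Substituting this upper bound for $4c-2$ into $w_n<(4c-2)^n$ produces an upper estimate for $x$ that is exponential in $n$ with a base polynomial in $k$. The heart of the argument is then the chain
\[
v_\nu \le v_m = w_n < (4c-2)^n,
\]
which, after taking logarithms, yields a lower bound on $n$ of the form $n > (\log v_\nu)/(\log(4c-2))$. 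Feeding in the explicit expressions for $v_\nu$ and for $c$ in each of the relevant cases $\nu=7$, $\nu=8$, and $\nu\ge 9$, and invoking the stated hypotheses $k\ge 12$, $k\ge 15$, $k\ge 7$ respectively, should force $n\ge 8$ in case~(1) and $n\ge 9$ in case~(2).

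The main obstacle I anticipate is making the numerics tight enough that the thresholds on $k$ do the job. The logarithmic inequality $n>(\log v_\nu)/(\log(4c-2))$ produces a ratio very close to $1$ at the leading order, since both $v_\nu$ and $c$ scale with $(2k+1)$ in closely related ways; the integer $n$ we are trying to bound from below is obtained by squeezing this ratio past an integer threshold ($8$ or $9$), so the lower-order correction terms in the polynomial expansions of $s_\nu$ and of $4c-2$ cannot be discarded. I would therefore keep the subleading terms in the expressions for $s_7,\dots,s_9$ and track them carefully, and this is precisely where the lower cutoffs on $k$ (and the dependence on $\nu$) enter: they guarantee the correction terms push the ratio over the required integer. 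For $\nu\ge 9$ a uniform argument should be available by bounding $v_\nu\ge v_9$ from below and noting that larger $\nu$ only increases the left-hand side while the relation between $c$ and $\nu$ keeps the base under control; I would handle $\nu=9$ explicitly and then argue monotonicity in $\nu$ to cover all $\nu\ge 9$ at once.
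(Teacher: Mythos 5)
Your approach has a fatal quantitative gap: the chain $v_\nu \le v_m = w_n < (4c-2)^n$ cannot produce the bounds $n\ge 8$ or $n\ge 9$. Recall that $c-1=k_0s^2$ with $s=s_\nu=v_\nu$, so $4c-2=4k_0v_\nu^2+2$ is essentially the \emph{square} of $v_\nu$ (times $4k_0$), not a quantity comparable to $v_\nu$. Hence
\[
n>\frac{\log v_\nu}{\log(4c-2)}\approx\frac{\log s_\nu}{2\log s_\nu+\log(4k_0)}\approx\frac12,
\]
so your inequality only gives $n\ge 1$, no matter how carefully you track subleading terms in $s_7,\dots,s_9$. (Your claim that the ratio is ``very close to $1$ at leading order'' is the misstep: both sides do scale with powers of $2k+1$, but the exponent on the $c$-side is about $2\nu$ while on the $v_\nu$-side it is about $\nu$.) The lower bound $m\ge\nu$ is simply far too weak to force $n$ up to $8$ or $9$; no one-sided comparison of $v_\nu$ against $w_n$ can do it, because $w_2$ already exceeds $v_{\nu}$ by an enormous margin.

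What the paper actually does is use the congruence $m\equiv 0\pmod\nu$ at full strength, for \emph{every} candidate $m$, not just to deduce $m\ge\nu$. For each fixed $n$ with $2\le n\le 7$ (resp.\ $\le 8$) it proves the two-sided interlacing $v_{(2n-1)\nu}<w_n<v_{2n\nu}$ (Lemmas \ref{lem:v>w} and \ref{lem:v<w}): since $(2n-1)\nu$ and $2n\nu$ are consecutive multiples of $\nu$ and $\{v_m\}$ is increasing, no admissible index $m$ can satisfy $v_m=w_n$, so that value of $n$ is excluded outright. The upper comparison $w_n<v_{2n\nu}$ rests on Lemma \ref{lem:n1}, i.e.\ $2^{2n}k_0^ns_\nu^{2n}<s_{2n\nu}$, which is exactly the statement that $v_{j\nu}$ grows like the $j$-th power of (a constant times) $s_\nu$ while $w_n$ grows like the $n$-th power of $4k_0s_\nu^2$; the hypotheses $k\ge 12$, $k\ge 15$, $k\ge 7$ enter in the lower comparison $v_{(2n-1)\nu}<w_n$, where the competing exponentials are close and a logarithmic inequality $g(\nu,n)>f(k)$ must be checked. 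The case $n=1$ needs a separate (easy) argument (Lemma \ref{lem:w1}). If you want to repair your write-up, replace the single inequality $v_\nu\le v_m$ by this case-by-case exclusion of each $n$ below the target threshold.
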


We first consider the case where $n=1$.
It is clear that
\begin{align}\label{in:<w1}
v_{\nu}=s_{\nu}<2s\sqrt{c-k}=w_1.
\end{align}
From sequence \eqref{seq:s} one easily see that
\begin{align}\label{s}
s_{\nu}=\frac{1}{2\sqrt{k_0(k+1)}}\left\{\left(2k+1+2\sqrt{k^2+k}\right)^{\nu}-\left(2k+1-2\sqrt{k^2+k}\right)^{\nu}\right\}.
\end{align}

\begin{lem}\label{lem:n1}
Let $k$ and $l$ be integers with $k \ge 3$ and $l \ge 2$.
\begin{itemize}
\item[{\rm (1)}] $2^lk_0^{l/4}s_{\nu}^l<s_{l \nu}$.
\item[{\rm (2)}] If $l \ge 3$, then $2^lk_0^{l/2}s_{\nu}^l<s_{l \nu}$.
\end{itemize}
\end{lem}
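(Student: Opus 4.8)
The plan is to argue directly from the Binet-type expression \eqref{s}. Write $\alpha=2k+1+2\sqrt{k^2+k}$ and $\beta=2k+1-2\sqrt{k^2+k}$, so that $\alpha\beta=1$, $0<\beta<1<\alpha$, and \eqref{s} reads $s_\nu=(\alpha^\nu-\beta^\nu)/(2\sqrt{k_0(k+1)})$. Substituting this into the quotient $s_{l\nu}/s_\nu^l$ and collecting the constant factors, I obtain
\[
\frac{s_{l\nu}}{s_\nu^{l}}=2^{l-1}\bigl(k_0(k+1)\bigr)^{(l-1)/2}\cdot\frac{\alpha^{l\nu}-\beta^{l\nu}}{(\alpha^\nu-\beta^\nu)^l}.
\]
This separates the required powers of $2$ and $k_0$ from a homogeneous ratio, and it is this ratio that I would bound first.

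To handle the last factor, set $A=\alpha^\nu$, so that $\beta^\nu=A^{-1}$ because $\alpha\beta=1$, and put $u=A^2=\alpha^{2\nu}>1$. Then
\[
\frac{\alpha^{l\nu}-\beta^{l\nu}}{(\alpha^\nu-\beta^\nu)^l}=\frac{A^l-A^{-l}}{(A-A^{-1})^l}=\frac{u^l-1}{(u-1)^l}=\frac{1+u+\cdots+u^{l-1}}{(u-1)^{l-1}}>1,
\]
the final inequality holding because the numerator exceeds $u^{l-1}$ while $(u-1)^{l-1}<u^{l-1}$ for $u>1$ and $l\ge2$. Consequently
\[
\frac{s_{l\nu}}{s_\nu^{l}}>2^{l-1}\bigl(k_0(k+1)\bigr)^{(l-1)/2},
\]
so both assertions reduce to elementary inequalities in $k$, $k_0$ and $l$, and the strictness here yields the strict inequalities in the statement.

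For part~(1) it then suffices to check $2^{l-1}(k_0(k+1))^{(l-1)/2}\ge 2^l k_0^{l/4}$, i.e.\ $k_0^{(l-2)/4}(k+1)^{(l-1)/2}\ge2$; since $l\ge2$ forces $k_0^{(l-2)/4}\ge1$, and $k\ge3$ gives $(k+1)^{(l-1)/2}\ge(k+1)^{1/2}\ge2$, this is immediate. For part~(2) the target becomes $(k+1)^{(l-1)/2}\ge2\sqrt{k_0}$, which is where a little care is needed: here I would use $l\ge3$ to get $(k+1)^{(l-1)/2}\ge k+1$, together with $k=k_0k_1^2\ge k_0$, whence $2\sqrt{k_0}\le2\sqrt{k}\le k+1$ by the arithmetic--geometric mean inequality. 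The only genuinely non-automatic point is thus the bookkeeping in~(2), where the larger factor $k_0^{l/2}$ must be absorbed using $k_0\le k$; everything else is forced by $\alpha\beta=1$ and the bound $(u^l-1)/(u-1)^l>1$.
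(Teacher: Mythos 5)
Your proposal is correct, and it takes a recognizably different route from the paper's, even though both start from the closed form \eqref{s}. You bound the \emph{ratio} $s_{l\nu}/s_\nu^l$: writing $\alpha=2k+1+2\sqrt{k^2+k}$, $\beta=\alpha^{-1}$ and $u=\alpha^{2\nu}$, you reduce the homogeneous part to $(u^l-1)/(u-1)^l=(1+u+\cdots+u^{l-1})/(u-1)^{l-1}>1$, so that both claims collapse to the clean constant inequalities $k_0^{(l-2)/4}(k+1)^{(l-1)/2}\ge 2$ and $(k+1)^{(l-1)/2}\ge 2\sqrt{k_0}$; I checked the algebra (including $\alpha\beta=1$ and the exponent bookkeeping) and it is sound, with strictness supplied by the ratio being strictly greater than $1$. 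The paper instead bounds the \emph{difference} $s_{l\nu}-2^lk_0^{l/4}s_\nu^l$ from below by an explicit positive expression of the shape $\{(k+1)^{(l-1)/2}-2\}(4k)^{l\nu}-1$, which degenerates at $(k,l)=(3,2)$ (where $(k+1)^{(l-1)/2}-2=0$) and therefore forces a separate direct computation for that case. Your multiplicative decomposition absorbs exactly that boundary case uniformly, because the strict inequality comes from the factor $(u^l-1)/(u-1)^l>1$ rather than from the constant; this is arguably tidier, at the cost of nothing. Both arguments deliver the same strict inequalities, so either can be substituted for the other.
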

\begin{proof}
(1) If $k=3$ and $l=2$, then we know from \eqref{s} that
\begin{align*}
s_{2\nu}-2^2k_0^{1/2}s_{\nu}^2&=\frac{1}{4\sqrt{3}}\left\{2-2(7-4\sqrt{3})^{2\nu}\right\}>0.
\end{align*}
In all other cases, \eqref{s} implies that
\begin{align*}
s_{l\nu}-2^lk_0^{l/4}s_{\nu}^l&=\frac{1}{2\sqrt{k_0(k+1)}}\left\{\left(2k+1+2\sqrt{k^2+k}\right)^{l\nu}-\left(2k+1-2\sqrt{k^2+k}\right)^{l\nu}\right\}\\
                              &\quad -\frac{1}{k_0^{l/4}(k+1)^{l/2}}\left\{\left(2k+1+2\sqrt{k^2+k}\right)^{\nu}-\left(2k+1-2\sqrt{k^2+k}\right)^{\nu}\right\}^l\\
&>\frac{1}{2k_0^{l/4}(k+1)^{l/2}}\left[\left\{k_0^{(l-2)/4}(k+1)^{(l-1)/2}-2\right\}\left(2k+1+2\sqrt{k^2+k}\right)^{l\nu}\right.\\
                              &\hspace{200pt} -\frac{k_0^{(l-1)/2}(k+1)^{(l-1)/2}}{\left(2k+1+2\sqrt{k^2+k}\right)^{l\nu}}\bigg]\\
&>\frac{1}{2k_0^{l/4}(k+1)^{l/2}}\left[\left\{(k+1)^{(l-1)/2}-2\right\}(4k)^{l\nu}-1\right]>0,
\end{align*}
where the last inequality holds for $k \ge 3$ and $l \ge 2$ with $(k,l)\ne (3,2)$. \par
(2) In the same way as (1), for $l \ge 3$ it holds that
\begin{align*}
s_{l\nu}-2^lk_0^{l/2}s_{\nu}^l&>\frac{1}{2\sqrt{k_0}(k+1)^{l/2}}\left[\left\{(k+1)^{(l-1)/2}-2\sqrt{k_0}\right\}\left(2k+1+2\sqrt{k^2+k}\right)^{l\nu}\right.\\
&\hspace{200pt}
-\frac{(k+1)^{(l-1)/2}}{\left(2k+1+2\sqrt{k^2+k}\right)^{l\nu}}\bigg]\\
&>\frac{1}{2\sqrt{k_0}(k+1)^{l/2}}\left\{(\sqrt{k}-1)^2\left(2k+1+2\sqrt{k^2+k}\right)^{l\nu}-1\right\}>0.
\end{align*}
\end{proof}

We apply Lemma \ref{lem:n1} (1) with $l=2$ to get
\begin{align}\label{in:>w2}
v_{2\nu}=s_{2\nu}>2^2k_0^{1/2}s_{\nu}^2=4s\sqrt{c-1}>2s\sqrt{c-k}=w_1.
\end{align}
Since if $v_m=w_n$, then $m$ is a multiple of $\nu$ by Lemma \ref{lem:mod}, inequalities \eqref{in:<w1} and \eqref{in:>w2} together imply the following.

\begin{lem}\label{lem:w1}
If $k \ge 3$, then $v_m=w_1$ has no solution for all non-negative integers $m$.
\end{lem}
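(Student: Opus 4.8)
The goal is to prove Lemma~\ref{lem:w1}: for $k \ge 3$, the equation $v_m = w_1$ has no solution in non-negative integers $m$. The plan is to combine the two strict inequalities that have just been established, namely \eqref{in:<w1} and \eqref{in:>w2}, together with the congruence restriction from Lemma~\ref{lem:mod}, into a sandwich argument that rules out every possible value of $m$.

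First I would recall the structural facts already in hand. By Lemma~\ref{lem:mod}, any solution of $v_m = w_n$ forces $m \equiv 0 \pmod{\nu}$; in particular a solution of $v_m = w_1$ can only occur when $m$ is a non-negative multiple of $\nu$, say $m = j\nu$ for some integer $j \ge 0$. Since $\{v_m\}$ and $\{s_\nu\}$ satisfy the same recurrence \eqref{seq:v}, \eqref{seq:s} with the same initial data, we have $v_{j\nu} = s_{j\nu}$, so it suffices to show that $w_1$ is never equal to $s_{j\nu}$ for any $j \ge 0$.

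Next I would dispose of the extreme cases and then squeeze. For $j = 0$ we have $v_0 = 0$, whereas $w_1 = 2s\sqrt{c-k} > 0$ because $c > k$ (indeed $c > 1$ and $c-k$ is a perfect square arising as $z_0^2$ with $z_0 > 0$), so $m = 0$ is impossible. For the remaining multiples, inequality \eqref{in:<w1} gives $v_\nu = s_\nu < w_1$, which rules out $j = 1$, while inequality \eqref{in:>w2}, obtained from Lemma~\ref{lem:n1}(1) with $l = 2$, gives $v_{2\nu} = s_{2\nu} > w_1$, ruling out $j = 2$. Because the sequence $\{s_{j\nu}\}_{j \ge 0}$ is strictly increasing in $j$ (each term is a positive multiple of the previous one plus a positive correction, as is clear from the explicit formula \eqref{s}), every $j \ge 2$ yields $v_{j\nu} = s_{j\nu} \ge s_{2\nu} > w_1$. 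Thus $w_1$ falls strictly between consecutive admissible values $s_\nu$ and $s_{2\nu}$, and no multiple of $\nu$ can produce $v_m = w_1$.

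The only point requiring a little care — and what I would flag as the main (though modest) obstacle — is justifying the monotonicity $s_\nu < s_{2\nu} < s_{3\nu} < \cdots$ cleanly enough that the single inequality \eqref{in:>w2} suffices to cover all $j \ge 2$ rather than merely $j = 2$. This follows immediately from \eqref{s}, since $s_{j\nu}$ is manifestly increasing in $j$, so the sandwich $s_\nu < w_1 < s_{2\nu} \le s_{j\nu}$ for all $j \ge 2$ completes the argument. Hence $v_m = w_1$ has no solution for any non-negative integer $m$ when $k \ge 3$.
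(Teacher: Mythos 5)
Your proposal is correct and follows essentially the same route as the paper: the paper deduces Lemma~\ref{lem:w1} directly from the sandwich $v_{\nu}<w_1<v_{2\nu}$ given by \eqref{in:<w1} and \eqref{in:>w2} together with the congruence $m\equiv 0\pmod{\nu}$ from Lemma~\ref{lem:mod}. You merely make explicit the monotonicity of $\{v_m\}$ and the trivial case $m=0$, which the paper leaves implicit.
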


Second, consider the case where $n \ge 2$.

\begin{lem}\label{lem:v>w}
If $k \ge 3$ and $2 \le n \le k+1$, then $v_{2n\nu}>w_n$.
\end{lem}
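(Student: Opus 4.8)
The plan is to pass to closed forms for both sequences and compare their growth \emph{ratio by ratio}. Writing $\alpha=2k+1+2\sqrt{k^2+k}$ and $A=\alpha^{2\nu}$, formula \eqref{s} together with $v_m=s_m$ gives $v_{2n\nu}=(A^n-A^{-n})/(2\sqrt{k_0(k+1)})$, while \eqref{seq:w'} gives $w_n=w_1(\beta^n-\beta^{-n})/(4\sqrt{c^2-c})$, where $\beta=2c-1+2\sqrt{c^2-c}$ is the larger root of $\lambda^2-(4c-2)\lambda+1$. The point is that the consecutive ratios of both sequences are governed by the single function $\phi_n(t):=(t^{n+1}-t^{-n-1})/(t^n-t^{-n})$: indeed $v_{2(n+1)\nu}/v_{2n\nu}=\phi_n(A)$ and $w_{n+1}/w_n=\phi_n(\beta)$.

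First I would record the elementary identity $\phi_n(t)=t+1/\bigl(\sum_{j=0}^{n-1}t^{2j+1}\bigr)$, valid for $t>1$, which shows $0<\phi_n(t)-t<1/n$ (each summand exceeds $1$). The base case $n=1$ is already available, since inequality \eqref{in:>w2} reads $v_{2\nu}>w_1$. The inductive step $v_{2n\nu}>w_n\Rightarrow v_{2(n+1)\nu}>w_{n+1}$ then reduces, after multiplying the inductive hypothesis by the positive ratios (so that $v_{2(n+1)\nu}=v_{2n\nu}\,\phi_n(A)>w_n\,\phi_n(A)>w_n\,\phi_n(\beta)=w_{n+1}$), to the single inequality $\phi_n(A)>\phi_n(\beta)$.

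The heart of the matter is therefore the comparison $A>\beta$, i.e.\ $\alpha^{2\nu}>\beta$. Recalling from the derivation of \eqref{eq:st} that $c=k_0s_\nu^2+1$ and using \eqref{s}, I would rewrite $c=(A+A^{-1}+4k+2)/(4(k+1))$, whence $\beta<4c-2=(A+A^{-1}+2k)/(k+1)$; the bound $A>\beta$ then follows from $kA-A^{-1}>2k$, which is immediate because $\nu\ge1$ gives $A\ge\alpha^2>(4k)^2$ for $k\ge3$. The same estimates yield $A-\beta>1$. Combining these, $\phi_n(A)-\phi_n(\beta)=(A-\beta)+\bigl(\phi_n(A)-A\bigr)-\bigl(\phi_n(\beta)-\beta\bigr)>(A-\beta)-1/n>0$, which closes the induction and in fact establishes $v_{2n\nu}>w_n$ for every $n\ge1$; thus the upper restriction $n\le k+1$ is not needed along this route.

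The main obstacle I anticipate is proving $A>\beta$ with enough margin. The naive bounds $\beta<4c$ and $v_{2n\nu}>(2\sqrt{k_0}\,s_\nu)^{2n}$ from Lemma \ref{lem:n1}(2) each silently discard a factor of order $(k+1)^{n-1/2}$, so they are far too lossy to close the gap directly. Expressing $c$, and hence $\beta$, explicitly in terms of $A=\alpha^{2\nu}$ is precisely what preserves that factor and makes the rate comparison $\phi_n(A)>\phi_n(\beta)$ go through cleanly.
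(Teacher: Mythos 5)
Your proof is correct, and it takes a genuinely different route from the paper. The paper argues by brute-force estimation: it iterates the recurrence \eqref{seq:w} to get $w_n<2s\sqrt{c-k}\,(4k_0s^2+2)^{n-1}=2^{2n}k_0^ns^{2n}\cdot\frac{1}{2k_0}\bigl(1+\frac{1}{2k_0s^2}\bigr)^{n-1}\sqrt{1-\frac{k-1}{k_0s^2}}$, invokes Lemma \ref{lem:n1}(2) with $l=2n$ to get $v_{2n\nu}>2^{2n}k_0^ns^{2n}$, and then runs a separate induction to show the leftover factor is at most $1$; the hypothesis $n\le k+1$ enters precisely to absorb the term $\sqrt{1-\frac{k-1}{k_0s^2}}$ into that induction. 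You instead work with the closed forms, reduce the claim to the ratio comparison $\phi_n(A)>\phi_n(\beta)$, anchor the induction at \eqref{in:>w2}, and obtain the key inequality $A-\beta>1$ by expressing $c$ exactly in terms of $A=\alpha^{2\nu}$ (a step I checked: $4c-2=(A+A^{-1}+2k)/(k+1)$ and $\beta<4c-2$ give $A-\beta>(kA-A^{-1}-2k)/(k+1)>1$ for $k\ge3$, while $0<\phi_n(t)-t<1/n$ follows from your telescoping identity). Your argument buys two things: it bypasses Lemma \ref{lem:n1}(2) entirely (only the $l=2$ case, via \eqref{in:>w2}, is needed), and it removes the hypothesis $n\le k+1$, proving $v_{2n\nu}>w_n$ for all $n\ge1$ — which is harmless here since the lemma is only applied for $n$ in the stated range, but is a genuine strengthening. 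The paper's approach has the virtue of staying entirely within the elementary recurrence estimates already set up in Section 3, whereas yours requires introducing the surd expressions for both sequences earlier than the paper does (they appear only in Section 5).
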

\begin{proof}
From \eqref{seq:w} we have
\begin{align*}
w_n&<(4c-2)w_{n-1}=(4k_0s^2+2)w_{n-1}<2s\sqrt{c-k}(4k_0s^2+2)^{n-1}\\
   &=2^{2n}k_0^n s^{2n} \cdot \frac{1}{2k_0}\left(1+\frac{1}{2k_0s^2}\right)^{n-1}\sqrt{1-\frac{k-1}{k_0s^2}}.
\end{align*}
Applying Lemma \ref{lem:n1} (2) with $l=2n \ge 4$, we have $2^{2n}k_0^ns^{2n}<s_{2n\nu}=v_{2n\nu}$.
Since $n \le k+1$, it suffices to show
\begin{align}\label{in:<4}
\left(1+\frac{1}{2k_0s^2}\right)^{2n-2}\left(1-\frac{n-2}{k_0s^2}\right) \le 4
\end{align}
for $n \ge 2$.
We show this by induction on $n$. \par
If $n=2$, then \eqref{in:<4} clearly holds.
Assume that \eqref{in:<4} holds for $n$ with $n \ge 2$.
Then,
\begin{align*}
\left(1+\frac{1}{2k_0s^2}\right)^{2n}\left(1-\frac{n-1}{k_0s^2}\right)
&=\left(1+\frac{1}{2k_0s^2}\right)^{2n-2}\left(1-\frac{n-2}{k_0s^2}\right)\left(1+\frac{1}{2k_0s^2}\right)^2\cdot \frac{1-\frac{n-1}{k_0s^2}}{1-\frac{n-2}{k_0s^2}}.
\end{align*}
Since
\begin{align*}
\left(1+\frac{1}{2k_0s^2}\right)^2\cdot \frac{1-\frac{n-1}{k_0s^2}}{1-\frac{n-2}{k_0s^2}}
<\left(1+\frac{1}{2k_0s^2}\right)^2\left(1-\frac{1}{k_0s^2}\right)
=1-\frac{3}{4k_0^2s^4}-\frac{1}{4k_0^3s^6}<1,
\end{align*}
the induction hypothesis shows that
\[
\left(1+\frac{1}{2k_0s^2}\right)^{2n}\left(1-\frac{n-1}{k_0s^2}\right)\le 4.
\]
\end{proof}

\begin{lem}\label{lem:v<w}
Assume that one of the following holds$:$
\begin{itemize}
\item $\nu=7$, $n \le 7$, $k \ge 12$,
\item $\nu=8$, $n \le 8$, $k \ge 15$,
\item $\nu \le 9$, $n \le 8$, $k \ge 7$.
\end{itemize}
Then, $v_{(2n-1)\nu}<w_n$.
\end{lem}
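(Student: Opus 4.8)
The plan is to establish the inequality $v_{(2n-1)\nu} < w_n$ by comparing explicit growth estimates for both sides, exactly mirroring the strategy used in Lemma \ref{lem:v>w} but now seeking a lower bound for $w_n$ and an upper bound for $v_{(2n-1)\nu}$. First I would obtain an upper bound for the left-hand side. Writing $\lambda := 2k+1+2\sqrt{k^2+k}$, the closed form \eqref{s} gives $s_{(2n-1)\nu} < \lambda^{(2n-1)\nu}/(2\sqrt{k_0(k+1)})$, and since $s_\nu$ is comparable to $\lambda^\nu/(2\sqrt{k_0(k+1)})$ this lets me control $v_{(2n-1)\nu}=s_{(2n-1)\nu}$ by a power of $s_\nu = s$, roughly of the shape $s_{(2n-1)\nu} \lesssim C\, k_0^{(2n-2)/2}(k+1)^{(2n-2)/2} s^{2n-1}$ for an explicit constant $C$. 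The exponent $2n-1$ is the key feature: it is one less than the exponent $2n$ that governs $w_n$.

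Next I would bound $w_n$ from below. From the recurrence \eqref{seq:w'}, $w_n > (4c-3)w_{n-1} > (4c-3)^{n-1}w_1$ (this is precisely the estimate underlying Lemma \ref{lem:x>}), so with $w_1 = 2s\sqrt{c-k}$ and $c = k_0 s^2 + 1$ I get a lower bound of order $(4k_0 s^2)^{n-1}\cdot s\sqrt{k_0}\,s = $ something proportional to $k_0^{n-1/2}\,s^{2n-1}$ up to lower-order corrections in $c-k$ versus $c-1$. Comparing the two bounds, both carry the same power $s^{2n-1}$, so the inequality reduces to comparing the accompanying factors in $k$, $k_0$, and $n$. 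The desired conclusion $v_{(2n-1)\nu}<w_n$ will follow once the constant and $k$-dependent factors on the $w_n$ side dominate those on the $v$ side.

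The main obstacle, and the reason the hypotheses split into three regimes, is that this comparison is tight: the two sides share the leading power of $s$, so the inequality hinges on the smaller factors, and these can only be controlled when $k$ is large enough relative to $n$ (hence the conditions $k\ge 12$, $k\ge 15$, $k\ge 7$ paired with the respective bounds on $n$). I expect the argument to require keeping careful track of the correction term $\sqrt{1 - (k-1)/(k_0 s^2)}$ coming from $\sqrt{c-k}$ versus $\sqrt{c-1}$, and of the factor $(1 + 1/(2k_0 s^2))^{n-1}$ arising from replacing $4c-3$ by $4c-2$ or $4c$; these are exactly the nuisance factors that appeared in \eqref{in:<4}. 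The cleanest route is probably to fix an explicit numerical inequality in $k$ and $n$ valid on each of the three stated ranges and verify it by a short induction on $n$ (again as in Lemma \ref{lem:v>w}) or by a direct monotonicity check, using the lower bound $s \ge s_1 = 2k_1$ and $k_0 k_1^2 = k$ to eliminate the auxiliary parameters in favour of $k$ alone.
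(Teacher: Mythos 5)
Your overall plan (upper-bound $v_{(2n-1)\nu}$, lower-bound $w_n$ via the recurrences, compare) is sound in outline, but the central bookkeeping step is wrong, and the error is not cosmetic. From $w_n>(4c-3)^{n-1}w_1$ with $w_1=2s\sqrt{c-k}$ and $c-1=k_0s^2$, the quantity you write down, $(4k_0s^2)^{n-1}\cdot s\sqrt{k_0}\,s$, equals $4^{n-1}k_0^{n-1/2}s^{2n}$ --- not something proportional to $k_0^{n-1/2}s^{2n-1}$ as you claim. So the two sides do \emph{not} carry the same power of $s$, and the comparison is not a delicate matter of accompanying constants. Indeed, if the exponents really were both $2n-1$, your own prefactors would give $v_{(2n-1)\nu}\approx 2^{2n-2}k_0^{n-1}(k+1)^{n-1}s^{2n-1}$ against $w_n\approx 2^{2n-2}k_0^{n-1/2}s^{2n-1}$, and the $v$-side would exceed the $w$-side by a factor of at least $(k+1)^{n-1}$, i.e.\ the lemma would be false. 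What actually rescues the inequality is the surplus factor of order $\sqrt{k_0}\,s$ on the $w_n$ side, which must dominate $(k+1)^{n-1}$; since $s=s_\nu>2k_1(4k+1)^{\nu-1}$, this is exactly where the coupling between $\nu$, $n$ and $k$ in the hypotheses enters ($\nu=7$ forces $n\le 7$, etc.). Your sketch never confronts this, and your diagnosis that the three regimes exist only to control nuisance factors such as $\bigl(1+1/(2k_0s^2)\bigr)^{n-1}$ misses the real constraint --- note that $\nu$ does not even appear in your final comparison.

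For contrast, the paper sidesteps the $s^l$ bookkeeping entirely: it bounds $v_{(2n-1)\nu}<2k_1(4k+2)^{(2n-1)\nu-1}$ by iterating the recurrence all the way down to $v_1$, bounds $w_n>2k_1(4k+1)^{2n\nu-n-1/2}$ using $s_\nu>2k_1(4k+1)^{\nu-1}$ together with \eqref{seq:w'}, and thereby reduces the lemma to the single inequality
\[
\frac{2n\nu-n-1/2}{2n\nu-\nu-1}>\frac{\log(4k+2)}{\log(4k+1)},
\]
which is verified at the corner cases $(\nu,n,k)=(7,7,12),(8,8,15),(9,8,7)$ by monotonicity in each variable. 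If you repair the exponent, your route (estimating $s_{(2n-1)\nu}$ against $s_\nu^{2n-1}$ via \eqref{s}, in the spirit of Lemma \ref{lem:n1}) reduces instead to a condition of the shape $\sqrt{k}\,(4k+1)^{\nu-1}\gtrsim(k+1)^{n-1}$, which is a workable and arguably sharper alternative; but as written the proposal does not prove the lemma.
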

\begin{proof}
We see from \eqref{seq:v} that
\[
v_{(2n-1)\nu}<(4k+2)v_{(2n-1)\nu-1}<2k_1(4k+2)^{(2n-1)\nu-1}
\]
and from \eqref{seq:w'} that
\[
w_n>(4c-3)w_{n-1}=(4k_0s^2+1)w_{n-1}>2s\sqrt{c-k}(4k_0s^2+1)^{n-1}.
\]
Since $s=s_{\nu}>(4k+1)s_{\nu-1}>2k_1(4k+1)^{\nu-1}$ by \eqref{seq:s} and $\sqrt{c-k}>\sqrt{k_0}s>2\sqrt{k}(4k+1)^{\nu-1}$, we have
\begin{align*}
w_n &> 4k_1(4k+1)^{\nu-1}\cdot 2\sqrt{k}(4k+1)^{\nu-1}\left\{16k(4k+1)^{2\nu-2}+1\right\}^{n-1}\\
    &> 8\cdot 16^{n-1}k_1 k^{n-1/2}(4k+1)^{2n(\nu-1)}=2k_1\cdot 4^{n-1/2}(4k)^{n-1/2}(4k+1)^{2n(\nu-1)}\\
    &>2k_1(4k+1)^{2n\nu-n-1/2}.
\end{align*}
Thus, it remains to show the inequality
\[
(4k+1)^{2n\nu-n-1/2}>(4k+2)^{2n\nu-\nu-1},
\]
which is equivalent to
\[
g(\nu,n):=\frac{2n\nu-n-1/2}{2n\nu-\nu-1}>\frac{\log(4k+2)}{\log(4k+1)}=:f(k).
\]
It is easy to check that $g(n,\nu)$ is an increasing function of $\nu$ and a decreasing function of $n$, while $f(k)$ is a decreasing function of $k$.
Since
\begin{align*}
g(7,7)&>1.0055>f(12),\\
g(8,8)&>1.0042>f(15),\\
g(9,8)&>1.0011>f(7),
\end{align*}
we see that if the assumption in the lemma holds, then $g(\nu,n)>f(k)$.
This completes the proof of Lemma \ref{lem:v<w}.
\end{proof}

Now we are ready to prove Proposition \ref{prop:n>}.

\begin{proof}[Proof of Proposition $\ref{prop:n>}$]
We may assume that $n \ge 2$ in view of Lemma \ref{lem:w1}.
Suppose that $\nu$, $n$, $k$ satisfy one of conditions in Lemma \ref{lem:v<w}.
Since $m \equiv 0 \pmod{\nu}$ by Lemma \ref{lem:mod}, it suffices to show that
\[
v_{(2n-1)\nu}<w_n<v_{2n\nu}.
\]
This is an immediate consequence of Lemmas \ref{lem:v>w} and \ref{lem:v<w}.
\end{proof}

\section{Upper bounds for solutions}\label{sec:ub}

Put
\[
\theta_1:=\sqrt{1-\frac{1}{c}}\quad \text{and}\quad \theta_2:=\sqrt{1-\frac{k}{(k+1)c}}.
\]

\begin{lem}\label{lem:max<}
\[
\max\left\{\left| \theta_1-\frac{k_1sx}{z}\right|,\left|\theta_2-\frac{ty}{(k+1)z}\right|\right\}<\frac{1}{2k_0x^2}.
\]
\end{lem}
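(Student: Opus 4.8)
The plan is to treat the two quantities by a single device. For each of them I would write the difference as $\theta_i-p_i/q_i$, rationalize via $\alpha-\beta=(\alpha^2-\beta^2)/(\alpha+\beta)$, and observe that once the factor $1/q_i$ is pulled out, the numerator $\theta_i^2q_i^2-p_i^2$ collapses---by the defining equations of the quadruple and the two Pellian equations \eqref{P:yx}, \eqref{P:zx}---to an expression depending only on $c$ and $k$, not on the large variable $x$. Since the denominators grow like a constant times $x^2$, the bound follows from crude lower estimates for $z$ and $y$. Throughout I would use that $c>k+1$ (recall $c-k$ is a positive perfect square and $c\neq k+1$), which makes the collapsed numerators positive and lets me discard the cross terms in the denominators.

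For the first term I would approximate $\theta_1=\sqrt{1-1/c}$ by $k_0sx/z$; here $k_0sx$ is the integer whose square reproduces $\theta_1^2z^2$ up to a bounded remainder. Indeed, using $c-1=k_0s^2$ together with \eqref{P:zx}, i.e. $z^2-k_0cx^2=c-k$, the key identity is
\[
\theta_1^2z^2-(k_0sx)^2=\frac{c-1}{c}\,z^2-k_0(c-1)x^2=\frac{c-1}{c}\bigl(z^2-k_0cx^2\bigr)=\frac{(c-1)(c-k)}{c}.
\]
Since $\theta_1z>k_0sx$, this gives $\bigl|\theta_1-\tfrac{k_0sx}{z}\bigr|<\dfrac{(c-1)(c-k)/c}{2k_0sxz}$, and inserting the elementary bounds $z>\sqrt{k_0c}\,x$ and $s=\sqrt{(c-1)/k_0}$ (so $sz>\sqrt{c(c-1)}\,x$) reduces the right-hand side to $\dfrac{(c-k)\sqrt{c-1}}{2k_0c^{3/2}x^2}<\dfrac{1}{2k_0x^2}$, the last step because $(c-k)\sqrt{c-1}<c^{3/2}$.

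For the second term I would use $\theta_2=t/\sqrt{(k+1)c}$, valid because $(k+1)c-k=t^2$ by \eqref{s't}. The analogous collapse now comes from $z^2=cd-k$ and $y^2=(k+1)d-k$:
\[
(k+1)z^2-cy^2=(k+1)(cd-k)-c\bigl((k+1)d-k\bigr)=k(c-k-1),
\]
whence $\theta_2^2(k+1)^2z^2-(ty)^2=\tfrac{t^2}{c}k(c-k-1)$. As before $\theta_2(k+1)z>ty$, so $\bigl|\theta_2-\tfrac{ty}{(k+1)z}\bigr|<\dfrac{t^2k(c-k-1)/c}{2(k+1)z\,ty}$, and the bounds $t<\sqrt{(k+1)c}$, $z>\sqrt{k_0c}\,x$ and $y>\sqrt{k_0(k+1)}\,x$ (the latter from \eqref{P:yx}, since $k_0^2k_1^2+k_0=k_0(k+1)$) simplify this to $\dfrac{k(c-k-1)}{2k_0(k+1)c\,x^2}<\dfrac{1}{2k_0x^2}$, because $k(c-k-1)<(k+1)c$. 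Taking the maximum of the two estimates yields the lemma.

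I do not expect a serious obstacle: the whole argument is bookkeeping once the two collapsing identities $z^2-k_0cx^2=c-k$ and $(k+1)z^2-cy^2=k(c-k-1)$ are in hand, and these do all the work. The only points requiring care are (i) confirming $c>k+1$ so both numerators are positive (this is what justifies replacing $\alpha+\beta$ by $2\beta$ in each denominator), and (ii) keeping the crude lower bounds for $z$ and $y$ mutually consistent so that all $c$- and $k$-dependent factors cancel down to the clean constant $1/(2k_0)$, the surplus factors $(c-k)/c<1$ and $k(c-k-1)/((k+1)c)<1$ supplying the strict inequality.
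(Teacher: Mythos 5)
Your proof is correct and follows essentially the same route as the paper's: both arguments rationalize the two differences and use the Pellian relations $z^2-k_0cx^2=c-k$ and $(k+1)z^2-cy^2=k(c-k-1)$ to collapse the numerators, then finish with the crude lower bounds $z>\sqrt{k_0c}\,x$ and $y>\sqrt{k_0(k+1)}\,x$. Note that you (correctly) work with $k_0sx/z$ rather than the $k_1sx/z$ printed in the lemma statement; the latter is a typo in the paper, whose own proof and subsequent application also use $k_0$.
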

\begin{proof}
By \eqref{P:yx} and \eqref{P:zx}, we have
\begin{align*}
\left|\theta_1-\frac{k_0sx}{z}\right|&=\frac{s\sqrt{k_0}}{z\sqrt{c}}\left|z-x\sqrt{k_0c}\right|=\frac{s\sqrt{k_0}}{z \sqrt{c}}\cdot \frac{c-k}{z+x\sqrt{k_0c}}\\
&<\frac{1}{x\sqrt{k_0c}}\cdot \frac{c}{2x\sqrt{k_0c}}=\frac{1}{2k_0x^2},\\
\left|\theta_2-\frac{ty}{(k+1)z}\right|&=\frac{t}{(k+1)z\sqrt{c}}\left|z\sqrt{k+1}-y\sqrt{c}\right|=\frac{t}{(k+1)z\sqrt{c}}\cdot \frac{k(c-k-1)}{z\sqrt{k+1}+y\sqrt{c}}\\
&<\frac{kc}{y\sqrt{c}\cdot 2y\sqrt{c}}=\frac{k}{2y^2}=\frac{k}{2\left\{k_0(k+1)x^2+1\right\}}<\frac{1}{2k_0x^2}.
\end{align*}
\end{proof}

\begin{thm}\label{thm:R}
Let $k \ge 3$ and let $N$ be a multiple of $k+1$.
If $N \ge 3.76 k^2(k+1)^2$, then
\[
\max\left\{\left|\theta_1-\frac{p_1}{q}\right|,\left|\theta_2-\frac{p_2}{q}\right|\right\}>(1.425\cdot 10^{28}(k+1)N)^{-1}q^{-\lambda},
\]
where
\[
\lambda=1+\frac{\log(10(k+1)N)}{\log(2.66k^{-2}(k+1)^{-1}N^2)}<2.
\]
\end{thm}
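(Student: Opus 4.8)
The plan is to prove Theorem~\ref{thm:R} by the hypergeometric method of Rickert and Bennett: one constructs, for every positive integer $n$, an explicit pair of very good simultaneous rational approximations to $\theta_1$ and $\theta_2$, and then feeds the three standard quantities — denominator growth, remainder smallness, and a non-vanishing determinant — into a gap principle to obtain a lower bound valid for \emph{all} rationals. First I would normalise the two targets into binomial functions. Setting $N=(k+1)c$, one has $\theta_1=\sqrt{1-(k+1)/N}$ and $\theta_2=\sqrt{1-k/N}$, so that, together with the trivial $1=\sqrt{1-0/N}$, the three numbers are the values at $x=1/N$ of $f_i(x)=\sqrt{1-a_i x}$ for the integers $a_0=0$, $a_1=k$, $a_2=k+1$. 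This $N$ is a multiple of $k+1$, and carrying $N$ along as a free parameter (rather than substituting $(k+1)c$ immediately) is what lets the final constants be optimised; the hypothesis $N\ge 3.76\,k^2(k+1)^2$ will enter only at the very end.

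The heart of the argument is the Pad\'e/contour-integral construction. For each $n$ I would introduce polynomials $p_{0n},p_{1n},p_{2n}$ of degree $n$ and remainders $R_{in}$ arising from the contour integrals
\[
\frac{1}{2\pi\sqrt{-1}}\int_{\Gamma_i}\frac{(1-xz)^{n+1/2}}{\prod_{j=0}^{2}(z-a_j)^{n+1}}\,dz,
\]
where $\Gamma_i$ encircles only $a_i$; here $p_{0n}$ is the common denominator, the system is Hermite--Pad\'e, and each $R_{in}(x)$ vanishes to order $2n+1$ at $x=0$, so that $p_{in}/p_{0n}\approx\theta_i$ after evaluation at $x=1/N$. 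The three quantitative estimates I would then prove are: (i) a common denominator bound — after multiplying by $N^{n}$ to clear $x=1/N$ and by the correct power of $2$ (controlled via Legendre's formula for the $2$-adic valuation of the half-integer binomial coefficients) to make everything integral, the denominators grow like $(\mathrm{const})\cdot(10(k+1)N)^{n}$, the factor $10$ coming from the $2$-adic contribution and the factor $k+1$ from clearing the residue denominators; (ii) a remainder bound $|R_{in}(1/N)|\le(\mathrm{const})\cdot(2.66\,k^{-2}(k+1)^{-1}N^{2})^{-n}$, where $N^{2}$ comes from the order $2n$ of vanishing at $x=1/N$ and the factor $k^{-2}(k+1)^{-1}=a_1^{-2}a_2^{-1}$ from the residues at the three points; and (iii) the non-vanishing of the $3\times3$ determinants formed by consecutive approximation triples, guaranteeing two genuinely independent good approximations at each level.

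With these ingredients I would invoke the explicit abstract lemma of the method (Rickert's gap principle): if integers $q_n,p_{1n},p_{2n}$ satisfy $|q_n|\le P\,p^{\,n}$ and $\max_j|q_n\theta_j-p_{jn}|\le L\,l^{-n}$ with the non-vanishing condition, then for all integers $p_1,p_2,q$ with $q>0$ one has $\max_j|\theta_j-p_j/q|>c\,q^{-\lambda}$, where $\lambda=1+\log p/\log l$ and $c$ is explicit in $P,L,p,l$. Taking $p=10(k+1)N$ and $l=2.66\,k^{-2}(k+1)^{-1}N^{2}$ from (i)--(ii) yields exactly $\lambda=1+\log(10(k+1)N)/\log(2.66\,k^{-2}(k+1)^{-1}N^{2})$, and tracking $P$ and $L$ through the lemma produces the constant $(1.425\cdot10^{28}(k+1)N)^{-1}$. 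Finally $\lambda<2$ is equivalent to $p<l$, i.e. to $10(k+1)N<2.66\,k^{-2}(k+1)^{-1}N^{2}$, which simplifies to $N>(10/2.66)k^2(k+1)^2$; this is precisely the hypothesis $N\ge 3.76\,k^2(k+1)^2$.

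The main obstacle is obtaining the \emph{sharp explicit} constants, not merely geometric growth. The delicate point is the arithmetic denominator estimate in (i): one must bound the $2$-adic (and residual) denominators of the hypergeometric coefficients tightly enough that the growth rate stays as small as $10(k+1)N$, since any slack inflates $p$, weakens $\lambda$, and — because $\lambda<2$ must survive — would force a threshold larger than $3.76\,k^2(k+1)^2$. Ensuring that all constants, in particular the single numerical constant $1.425\cdot10^{28}$, are uniform in $k\ge3$ and in the admissible $N$ is the part requiring the most care; by contrast the contour estimates for the remainder in (ii) and the determinant non-vanishing in (iii) are routine once the construction is in place.
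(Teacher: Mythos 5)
Your proposal follows essentially the same route as the paper: the Rickert--Bennett hypergeometric construction with $a_j\in\{0,-k,-k-1\}$ and $N$ a multiple of $k+1$, the arithmetic denominator estimate giving $P\approx 10(k+1)N$, the remainder estimate giving $L\approx 2.66k^{-2}(k+1)^{-1}N^2$, and the standard gap-principle lemma (the paper outsources the generic integrality and gap-principle steps to \cite{CF} and \cite{R} rather than re-deriving them, but the content is the same), with the threshold $3.76\,k^2(k+1)^2>10/2.66$ arising exactly as you say from $\lambda<2$. The outline is correct and matches the paper's proof in structure and in all the key parameters.
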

\begin{proof}
The proof proceeds along the same lines as the one of \cite[Theorem 2.2]{CF} or \cite[Theorem 2.5]{FF}.\par
For $0 \le i,\,j \le 2$ and integers $a_0$, $a_1$, $a_2$,
we define the polynomial $p_{ij}(x)$ by
\begin{align*}
p_{ij}(x):=\sum_{ij}\left(\begin{array}{c}k+1/2\\ h_j\end{array}\right)
(1+a_jx)^{k-h_j}x^{h_j}\prod_{l \ne j}\left(\begin{array}{c}-k_{ij}\\ h_l\end{array}\right)(a_j-a_l)^{-k_{il}-h_l},
\end{align*}
where $k_{il}=k+\delta_{il}$ with $\delta_{il}$ the Kronecker delta,
$\sum_{ij}$ denotes the sum over all non-negative integers
$h_0$, $h_1$, $h_2$ satisfying $h_0+h_1+h_2=k_{ij}-1$, and
$\prod_{l\ne j}$ denotes the product from $l=0$ to $l=2$
omitting $l=j$ (which is expression (3.7) in \cite{R} with $\nu=1/2$).
Substituting $x=1/N$ we have
\[
p_{ij}(1/N)=\sum_{ij}\left(\begin{array}{c}k+1/2\\ h_j\end{array}\right)
C_{ij}^{-1}\prod_{l \ne j}\left(\begin{array}{c}-k_{ij}\\ h_l\end{array}\right),
\]
where
\[
C_{ij}:=\frac{N^k}{(N+a_j)^{k-h_j}}\prod_{l \ne j}(a_j-a_l)^{-k_{il}-h_l}.
\]
We take $a_0:=-k-1$, $a_1:=-k$, $a_2:=0$ and $N:=(k+1)N_0$ for some integer $N_0$. \par
If $j=0$, then
\[
|C_{i0}|=\frac{N^k(k+1)^{k_{i1}+h_1+h_0-k}}{(N_0-1)^{k-h_0}},
\]
which shows $(k+1)^kN^kC_{i0}^{-1} \in \bZ$.
If $j=1$, then
\[
|C_{i1}|=\frac{N^kk^{k_{i2}+h_2}}{(N-k)^{k-h_1}},
\]
which shows $k^{2k}N^kC_{i1}^{-1} \in \bZ$.
If $j=2$, then
\[
|C_{i2}|=\frac{N^k(k+1)^{k_{i0}+h_0+h_2-k}k^{k_{i1}+h_1}}{N_0^{k-h_2}},
\]
which shows $k^{2k}(k+1)^kN^kC_{i2}^{-1} \in \bZ$.
Hence, $\{k^2(k+1)N\}^kC_{ij}^{-1} \in \bZ$ for all $i$, $j$.
It follows from \cite[Theorem 2.2]{CF} that
\[
p_{ijk}:=2^{-1}\{4k^2(k+1)N\}^k\cdot \frac{1.6^k}{4.09\cdot 10^{13}}\cdot p_{ij}(1/N)\in \bZ.
\]
Putting $\theta_0:=1$, we obtain
\[
|p_{ijk}|<pP^k\quad \text{and}\quad \left|\sum_{j=0}^2 p_{ijk}\theta_j \right|<lL^{-k},
\]
where
\begin{align*}
p&=\frac{4.09\cdot10^{13}}{2}\left(1+\frac{k}{2N}\right)^{1/2}<2.048\cdot 10^{13},\\
P&=\frac{32\left(1+\frac{2k+3}{2N}\right)k(k+1)N}{1.6(2k+1)}<10(k+1)N,\\
l&=\frac{4.09\cdot10^{13}}{2}\cdot \frac{27}{64}\left(1-\frac{k+1}{N}\right)^{-1}<8.692\cdot10^{12},\\
L&=\frac{1.6}{4k^2(k+1)N}\cdot \frac{27}{4}\left(1-\frac{k+1}{N}\right)^2N^3>\frac{2.66N^2}{k^2(k+1)},\\
\lambda&=1+\frac{\log(10(k+1)N)}{\log(2.66k^{-2}(k+1)^{-1}N^2)}<2,\\
C^{-1}&=4pP\left(\max\{1,2l\}\right)^{\lambda-1}<4\cdot 2.048\cdot10^{13}\cdot10(k+1)N\cdot2\cdot8.692\cdot10^{12}\\
&<1.425\cdot10^{28}(k+1)N.
\end{align*}
This completes the proof of Theorem \ref{thm:R}.
\end{proof}

Applying Theorem \ref{thm:R} with $N=(k+1)c$, $p_1=k_0(k+1)sx$, $p_2=ty$, $q=(k+1)z$ and Lemma \ref{lem:max<}, we have
\[
\left(1.425\cdot 10^{28}(k+1)^2c\right)^{-1}(k+1)^{-\lambda}z^{-\lambda}<\frac{1}{2k_0x^2}.
\]
Since $z^2=k_0cx^2+c-k<k_0(c+1)x^2$ by $c \le d-1=k_0x^2$, we see from $\lambda<2$ that
\begin{align*}
x^{2-\lambda}&<\frac12 \cdot 1.425\cdot10^{28}(k+1)^4c(c+1)=7.125\cdot10^{27}\left(1+\frac1k\right)^4\left(1+\frac1c\right)k^4c^2\\
&<\left(1.502\cdot10^{14}k^2c\right)^2.
\end{align*}
Since
\begin{align*}
\frac{2}{2-\lambda}&=\frac{2\log \left(2.66k^{-2}(k+1)c^2 \right)}{\log\left(0.266k^{-2}(k+1)^{-1}c\right)}<\frac{4\log(1.884k^{-1/2}c)}{\log(0.1995k^{-3}c)},
\end{align*}
we have
\begin{align*}
\log x < \frac{4\log(1.502\cdot 10^{14}k^2c)\log(1.884k^{-1/2}c)}{\log(0.1995k^{-3}c)}.
\end{align*}
which combined with Lemma \ref{lem:x>} implies that
\begin{align}\label{in:n<}
n-1<\frac{4\log(1.502\cdot10^{14}k^2c)\log(1.884k^{-1/2}c)}{\log(4c-3)\log(0.1995k^{-3}c)}.
\end{align}
Inequality \eqref{in:n<} shows that if $\nu=8$ and $k \ge 662$, then $n \le 7$,
and if $\nu=8$ and $k \ge 5$, then $n \le 8$.
Moreover, since the right-hand side of inequality \eqref{in:n<} is a decreasing function of $c=c_{\nu}$,
we see that if $\nu \ge 9$ and $k \ge 3$, then $n \le 8$.
Comparing these upper bounds for $n$ with the lower bounds in Proposition \ref{prop:n>}, we obtain the following.
\begin{prop}\label{prop:ub}
Besides Assumption \ref{ass}, we assume that one of the following holds$:$
\begin{itemize}
\item $\nu=7$ and $k \ge 662$,
\item $\nu=8$ and $k \ge 15$,
\item $\nu \ge 9$ and $k \ge 7$.
\end{itemize}
Then, there exist no $D(-k)$-quadruples of the form $\{k,k+1,c,d\}$ with $c=c_{\nu}$ and $1<c<d$.
\end{prop}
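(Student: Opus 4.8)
The plan is to combine the lower bounds for $n$ from Proposition \ref{prop:n>} with the upper bounds for $n$ derived from the hypergeometric method, and show that the two ranges are incompatible in each listed case. The underlying strategy is that any $D(-k)$-quadruple $\{k,k+1,c,d\}$ with $c=c_\nu$ and $1<c<d$ would, under Assumption \ref{ass}, yield a common solution $v_m=w_n$ of the two Pellian equations \eqref{P:yx} and \eqref{P:zx}, giving a genuine value of $n$; but the lower bound on $n$ forces $n$ to be large while the upper bound forces it to be small, a contradiction. So the first step is simply to observe that such a quadruple produces a solution $x=v_m=w_n$ with $n\ne 0$ (the case $n=0$ corresponds to $d_0=1$, already excluded under the running hypotheses), so that both the lower bound of Proposition \ref{prop:n>} and the upper bound \eqref{in:n<} apply.

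Next I would treat each of the three cases in turn. For $\nu\ge 9$ and $k\ge 7$: Proposition \ref{prop:n>}(2) gives $n\ge 9$, while the discussion following \eqref{in:n<} shows that for $\nu\ge 9$ and $k\ge 3$ one has $n\le 8$; these contradict each other, so no such quadruple exists. For $\nu=8$ and $k\ge 15$: Proposition \ref{prop:n>}(2) again gives $n\ge 9$, while \eqref{in:n<} yields $n\le 8$ once $k\ge 5$; since $k\ge 15\ge 5$, the two bounds again collide. The case $\nu=7$ with $k\ge 662$ works the same way: Proposition \ref{prop:n>}(1) gives $n\ge 8$ when $k\ge 12$, whereas \eqref{in:n<} with $\nu=7$ and $k\ge 662$ forces $n\le 7$ (this is exactly the threshold stated right after \eqref{in:n<}). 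Since $662\ge 12$, the lower bound $n\ge 8$ and upper bound $n\le 7$ are inconsistent.

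The heart of the matter is making the upper bound \eqref{in:n<} explicit enough to read off the cutoffs $k\ge 662$ (for $\nu=7$) and $k\ge 5$ (for $\nu=8$), together with the statement that the right-hand side is decreasing in $c=c_\nu$, which lets the $\nu\ge 9$ bound follow from the $\nu=8$ one. This requires substituting the value of $c=c_\nu$ as a polynomial in $k$ (obtainable from $c-1=k_0s^2$ and the explicit formulas for $s_\nu$ in Section \ref{sec:fs}) and checking the resulting transcendental inequality in $k$. That monotonicity-and-substitution bookkeeping is the only genuinely delicate part; once it is in place, the contradiction in each case is immediate.

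Assembling these three contradictions establishes that under Assumption \ref{ass}, no $D(-k)$-quadruple $\{k,k+1,c,d\}$ with $c=c_\nu$ and $1<c<d$ exists in any of the listed ranges, which is precisely the assertion of Proposition \ref{prop:ub}.
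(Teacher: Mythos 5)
Your proposal is correct and follows exactly the paper's (implicit) proof: the proposition is obtained by comparing the upper bounds on $n$ stated immediately after inequality \eqref{in:n<} (namely $n\le 7$ for $\nu=7$, $k\ge 662$; $n\le 8$ for $\nu=8$, $k\ge 5$; and $n\le 8$ for $\nu\ge 9$ by monotonicity in $c_\nu$) with the lower bounds of Proposition \ref{prop:n>}, after noting that $n\ne 0$ since $n=0$ would force $d=1$. You also correctly read the paper's ``$\nu=8$ and $k\ge 662$'' as a typo for $\nu=7$, and your remark that the thresholds require substituting $c=c_\nu$ into \eqref{in:n<} is the only computational content the paper leaves unstated.
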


In view of Proposition \ref{prop:ub}, it remains to consider the following cases:
\begin{itemize}
\item $k=3,~5,~6$,
\item $1 \le \nu \le 6$ and $k \ge 7$,
\item $\nu=7$ and $7 \le k \le 661$,
\item $\nu=8$ and $7 \le k \le 14$.
\end{itemize}

\section{Linear form in logarithms}
We are trying to solve $x=v_m=w_n$, where
\[ v_0 = 0, v_1 = 2k_1, v_{m+2} = (4k+2)v_{m+1}-v_m, \]
($k=k_0k_1^2$). The solution of this recurrence relation is
\[ v_m = \frac{1}{2\sqrt{k_0(k+1)}}\left( (2k+1+2\sqrt{k^2+k})^m - (2k+1-2\sqrt{k^2+k})^m\right).\]

\noindent The other sequence is $w_0 = 0, w_1=2s\sqrt{c-k}, w_{n+2} = (4c-2)w_{n+1}-w_n$,
or explicitly \[ w_n = \frac{\sqrt{c-k}}{2\sqrt{ck_0}}\left( (2c-1+2\sqrt{c^2-c})^n - (2c-1-2\sqrt{c^2-c})^n\right).\]

\noindent Lemma 3.3 
implies that $m> n \geqslant 2$ or $x=v_0=w_0=0$, so we assume $k\geqslant 3$ and $m>n\geqslant 2$.

Define $P=\frac{1}{\sqrt{k+1}}(2k+1+2\sqrt{k^2+k})^m$ and $Q=  \frac{\sqrt{c-k}}{\sqrt{c}}(2c-1+2\sqrt{c^2-c})^n$. Then $v_m=w_n$ implies that $P-\frac{1}{k+1}P^{-1} = Q-\frac{c-k}{c}Q^{-1}$. Since $c\geqslant c_1 = 4k+1$, we get $\frac{c-k}{c} > \frac{1}{k+1}$.
Then
\begin{align*}
P - Q &= \frac{1}{k+1}P^{-1} - \frac{c-k}{c}Q^{-1} <  \frac{1}{k+1}P^{-1} -  \frac{1}{k+1}Q^{-1} =\\
&=  \frac{1}{k+1}(P^{-1}-Q^{-1}) = \frac{1}{k+1}P^{-1}Q^{-1} (Q-P),
\end{align*}
hence $Q>P$.

Now $Q-P < \frac{c-k}{c}Q^{-1}$, hence $\frac{Q-P}{Q} < \frac{c-k}{c}Q^{-2}$. Since $n\geqslant 2$, $Q > \sqrt{\frac{c-k}{c}}(4\sqrt{c^2-c})^2 = 16(c^2-c)\sqrt{\frac{c-k}{c}}$, so $Q^{-2} < \frac{c}{c-k}\cdot \frac{1}{256(c^2-c)^2}$. We can conclude that $\frac{Q-P}{Q} < \frac{1}{256(c^2-c)^2} \leqslant \frac{1}{6230016}$ because $c\geqslant 1+4k \geqslant 13$.

We can get the upper bound on $\log\frac{Q}{P}$,
\begin{align*}
0 < \log\frac{Q}{P} &= -\log\left(1-\frac{Q-P}{Q}\right) \leqslant \frac{-\log\left(1-\frac{1}{6230016}\right)}{\frac{1}{6230016}} \cdot \frac{c-k}{c} Q^{-2} \\
&< 1.00001 (2c-1+2\sqrt{c^2-c})^{-2n}.
\end{align*}

We define the form as $\Lambda = n\log\alpha_1 - m\log\alpha_2+\log\alpha_3$, where
\begin{align*}
\alpha_1 &= 2c-1+2\sqrt{c^2-c},& & h(\alpha_1) = \frac 12 \log\alpha_1\\
\alpha_2 &= 2k+1+2\sqrt{k^2+k},& & h(\alpha_2) = \frac 12 \log\alpha_2\\
\alpha_3 &= \sqrt{\frac{(c-k)(k+1)}{c}},& & h(\alpha_3) = \frac 12 \log\left((c-k)(k+1)\right).
\end{align*}
 and $h(\alpha_j)$ denotes the absolute logarithmic height of $\alpha_j$ for $1\leqslant j \leqslant 3$.

We have already obtained $0 < \Lambda < 1.0001\alpha_1^{-2n}$. This implies that $m < \frac{\log\alpha_1}{\log\alpha_2}(n+1)$. We will now apply the following theorem by Matveev.

\begin{theorem}[Matveev]
Let $\Lambda$ be a linear form in logarithms of $l$ multiplicatively independent totally real algebraic numbers $\alpha_1, \dotsc, \alpha_l$ with rational integer coefficients $b_1, \dotsc, b_l$ ($b_l \neq 0$). Define $D=[\Q(\alpha_1, \dotsc, \alpha_l) \colon \Q]$, $A_j = \max\{Dh(\alpha_j), |\log \alpha_j|\}$, $B= \max\left\{1, \max\left\{ \frac{|b_j|A_j}{A_l} \colon 1 \leqslant j \leqslant l\right\} \right\}.$ Then
\[ \log\Lambda > -C(l)C_0W_0D^2\Omega,\]
where $C(l)= \frac{8}{(l-1)!}(l+2)(2l+3)(4e(l+1))^{l+1}, W_0 = \log(1.5eBD\log(eD))$, \\$C_0 = \log\left(e^{4.4l+7}l^{5.5}D^2\log(eD)\right), \Omega=A_1\cdots A_l$.
\end{theorem}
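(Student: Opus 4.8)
The plan begins by recognizing that this is not a result to be proved from scratch but the standard explicit lower bound for linear forms in logarithms due to Matveev, invoked here as an external black box; in the paper its ``proof'' is simply a citation to Matveev's original work. Still, I can describe the transcendence-theoretic strategy underlying any such Baker-type estimate. The argument would proceed by contradiction: assuming $\Lambda = b_1\log\alpha_1+\cdots+b_l\log\alpha_l$ is nonzero yet smaller than the claimed bound, one aims to convert this analytic smallness into an algebraic impossibility, the quantities $D$, the $A_j$ and $B$ entering as the bookkeeping parameters controlling degrees and heights throughout.

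The key steps, in the order I would carry them out, follow the Gel'fond--Baker method. First, via Siegel's lemma (a pigeonhole argument solving an underdetermined linear system), I would construct an auxiliary polynomial $P$ with small algebraic-integer coefficients so that an associated analytic function, built from the powers $\alpha_j^z$ along the direction fixed by the $b_j$, vanishes to high order on a grid of integer points. Second, I would \emph{extrapolate}: the hypothesis that $\Lambda$ is tiny makes the logarithms nearly linearly dependent and, via a Schwarz-type maximum-modulus estimate, forces this function to remain small on a far larger grid. Third, I would invoke a zero estimate of Philippon type: a function of the prescribed degree cannot vanish to such high order at so many points unless the $\alpha_j$ satisfy a multiplicative relation, contradicting their multiplicative independence. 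Optimizing the construction parameters then turns this contradiction into the explicit inequality $\log\Lambda > -C(l)C_0W_0D^2\Omega$.

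The main obstacle --- and precisely Matveev's achievement --- is to track every constant explicitly and to make the dependence on the number of logarithms $l$ essentially optimal, yielding the factor $(4e(l+1))^{l+1}$ rather than the far larger constants of earlier bounds. Balancing the extrapolation range against the zero estimate while sharpening the numerical constants (here aided by the totally real setting, which lets one work with real logarithms throughout) is the delicate part, and reproducing it in full lies well beyond the scope of the present paper; accordingly the theorem is quoted rather than reproved, and in the sequel I would simply substitute the explicit values of $\alpha_1,\alpha_2,\alpha_3$, their heights, $D=2$, and the bound $m<\tfrac{\log\alpha_1}{\log\alpha_2}(n+1)$ into the stated estimate to extract an upper bound on $n$.
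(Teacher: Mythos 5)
Your treatment matches the paper's: Matveev's theorem is invoked as an external black box with no proof supplied, and your sketch of the Gel'fond--Baker machinery (auxiliary function, extrapolation, zero estimate) is accurate background rather than a required argument. One small caution for the subsequent application you describe: the paper takes $D=4$, not $D=2$, for the degree of $\Q(\alpha_1,\alpha_2,\alpha_3)$, so make sure to use the correct value when substituting into the estimate to bound $n$.
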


In our problem, $l=3, b_1=n, b_2=-m, b_3=1, D=4$. Since $B < m <  \frac{\log\alpha_1}{\log\alpha_2}(n+1)$, we can get the following bounds
$C(3) < 644065984.903$, $C_0 < 29.8847$, $W_0 < \log\left(38.92\cdot \frac{\log\alpha_1}{\log\alpha_2}(n+1)\right)$.
For $\Omega$ we can take $\Omega = 8\log\alpha_1\log\alpha_2\log\left((c-k)(k+1)\right)$.

Combining the upper and lower bound for $\log\Lambda$ and using $\alpha_2 < 4k+2$, we get
\begin{align}\label{in:log}
\frac{n}{\log\left(K(n+1)\right)} < 1.23185\cdot 10^{12} \log(4k+2)\log\left((c-k)(k+1)\right),
\end{align}
where $K=38.92 \frac{\log\alpha_1}{\log\alpha_2}$.

\section{Small values of $k$}\label{sec:k}

In the case $k=3$, we have $$c-k=1+3s^2-3=3s^2-2,$$ which should be a square. We also know that $s$ is even, i.e. $s=2s'$ for some integer $s'$. Then, putting $3s^2-2=X^2$, for some integer $X$, we get $$X^2-12s'^2=-2,$$ which obviously does not have any integer solutions if we consider congruences modulo 4. \\

In the case $k=5$, we have that $$c-k=5s^2-4$$ is perfect square. Again, because $s$ is even, for $s=2s'$, we get $$X^2-5s'^2=-1,$$ for some integers $X$ and $s'$. Then, remembering the recurrence relation for $s$, we get that $s=v_m=2w_n$ where $$v_0=0,\,v_1=2,\,v_{m+2}=22v_{m+1}-v_m,$$  $$w_0=1,\,w_1=17,\,w_{n+2}=18w_{n+1}-w_n.$$ Using the standard methods, i.e. Baker's theory on a linear form in logarithms, we get that the only solution is $s=v_1=2w_0=2$, which gives us $c=21$. Now, we have the exact values for $k$ and $c$ and we can again use the linear form in logarithms described above, and it will gives us the desired result. \\

In the case $k=6$, we have that $-k\equiv2\pmod{4}$ and then it is known that there is no $D(-6)$-quadruple.

\section{Small indices $\nu \in \{1, 2, 3, 4, 5, 6, 7, 8\}$ }\label{sec:2-8}

\subsection{Case $\nu \in \{7,8\}$}\label{subsec:78}

Whenever we have fixed $k$ and $c=c_{\nu}$, we can solve our problem using inequalities \eqref{in:log}. 
After getting the first upper bound on $n$, we can reduce it using the well-known Baker-Davenport reduction method, which gives us the desired result in all cases. 
More precisely, in the case $c=c_7$, for $7\leq k\leq 661$ we get $n<4.73\cdot10^{16}$, 
and in the case $c=c_8$, for $7\leq k \leq14$ we get $n<1.39\cdot10^{16}$. 
Using the reduction, after at most two steps we get $n\leq2$ and we can check that the only solution for $x$ is $x=v_0=w_0=0$ which gives us $c=1$.

\subsection{Case $\nu =1$}\label{subsec:1}

For $\nu=1$ we have $c=c_1=1+4k$, $k\geq7$. Now, $c-k=1+3k$ should be a square which implies $1+3k=(3l\pm1)^2$ or $k=l(3l\pm2)$ for some positive integer $l$. Thus, $\sqrt{c-k}=3l\pm1$. In this case $s=s_1=2k_1$ and we want to solve $v_m=w_n$, for positive integers $m$ and $n$, where
$$v_0=0,\, v_1=2k_1,\,v_{m+2}=(4k+2)v_{m+1}-v_m,$$
$$w_0=0,\, w_1=4k_1(3l\pm1),\,w_{n+2}=(16k+2)w_{n+1}-w_n.$$
Considering congruences modulo $2k_1(4k+1)$ we get
$$v_m\equiv0,\pm2k_1\pmod{2k_1(4k+1)},$$
$$w_n\equiv(-1)^{n+1}4nk_1(3l\pm1)\pmod{2k_1(4k+1)},$$ which implies
$$(-1)^{n+1}4nk_1(3l\pm1)\equiv0,\pm2k_1\pmod{2k_1(4k+1)}.$$
Now, from $$3(4k+1)=(12l\pm4)(3l\pm1)-1,$$ we see that $(3l\pm1)$ and $(4k+1)$ are relatively prime and then $$\pm2n\equiv0,\pm(12l\pm4)\pmod{(4k+1)},$$ which, in the worst case, implies that $$2n>2(4k+1)-12l-4>4k.$$

Assuming $n\geq2$, we can combine this lower bound for $n$ with the upper bound \eqref{in:log} to get $k<8.528\cdot10^{16}$ and finally $l<1.68603\cdot10^8$, which is small enough to do the Baker-Davenport reduction method, which gives us the desired result in the same way as in the last subsection.

\subsection{Cases $\nu \in \{2, 3, 4, 5\}$}\label{subsec:2-5}

For $\nu = 2$ we get $y^2 = c_2-k = k_0s_2^2+1-k= 64k^3+64k^2+15k+1$.
This is an elliptic curve. Multiplying by $64$ and letting $X=16k, Y=8y$, we get Weierstrass form $Y^2 = X^3+16X^2+60X+64$.
Using Sage to find integral points on this curve and condition $16\mid X$ gives that $64k^3+64k^2+15k+1$ is a square only for $k=0$ and $k=1$ ($1^2$ and $12^2$).

For $c_3$ we get
$y^2 = c-k = 1 + 35 k + 384 k^2 + 1408 k^3 + 2048 k^4 + 1024 k^5$,
but the right-hand side can be factored to $(1 + 16 k + 32 k^2) (1 + 19 k + 48 k^2 + 32 k^3)$.
The factors are relatively prime, so $1 + 19 k + 48 k^2 + 32 k^3$ must also be a square. This gives us another elliptic curve and as before, one gets only a few points on it (for $k=0, 1$ and $165$).

For $c_4$, $y^2 = 1+63k + 1280k^2+9472k^3 +32768k^4+57344k^5 + 49152k^6+16384k^7$
factors as $y^2 = (1 + 32 k + 128 k^2 + 128 k^3) (1 + 31 k + 160 k^2 + 256 k^3 + 128 k^4)$,
so $Y^2 = 1 + 32 k + 128 k^2 + 128 k^3$. Only nonnegative integral $k$ are $k=0$ and $k=1$.

For $c_5$, $y^2 = (1 + 48 k + 352 k^2 + 768 k^3 + 512 k^4) (1 + 51 k + 400 k^2 +
   1120 k^3 + 1280 k^4 + 512 k^5)$, so $Y^2=1 + 48 k + 352 k^2 + 768 k^3 + 512 k^4$. Using IntegralQuarticPoints([512, 768, 352, 48, 1]) in Magma, we get that the only integral solutions are $(1, -41), (-1, 7 ), ( 0, -1 )$.

\subsection{Case $\nu = 6$: hyperelliptic curve of genus $2$}\label{subsec:6}
   For $\nu=6$, we get \[ y^2=(1 + 72 k + 768 k^2 + 2816 k^3 + 4096 k^4 + 2048 k^5) (1 + 71 k +
   840 k^2 + 3584 k^3 + 6912 k^4 + 6144 k^5 + 2048 k^6).\]

   The factors on the right hand side are relatively prime, so each one has to be a complete square. We focus on the first factor. If  $1 + 72 k + 768 k^2 + 2816 k^3 + 4096 k^4 + 2048 k^5$ is a square, then so is $16(1 + 72 k + 768 k^2 + 2816 k^3 + 4096 k^4 + 2048 k^5)=16+\dots+(8k)^5$. This allows us to make the coefficients smaller by a change of variable $x=8k$ and multiplying $y$ by $4$. Now, we are looking for integral points on the following hyperelliptic curve \[ C_6 : y^2 =x^5 + 16x^4 + 88x^3 + 192x^2 + 144x +16.\]
   We resolve this problem using methods developed by Gallegos-Ruiz in his PhD thesis \cite{PhD} and in \cite{Gall}.

   Using Magma \cite{mgma}, one can determine generators for the Mordell-Weil group of $J_6 (\Q)$, the Jacobian of $C_6$. We obtain that $J_6(\Q)$ is free of rank $r=2$ with Mordell-Weil basis (written in Mumford representation that Magma uses):
\[ D_1 = \langle x+2, -4\rangle, \quad D_2 = \langle x^2 + 8x + 12, 4\rangle, \]
   while the torsion subgroup is trivial (so we let $t=1$, the size of the torsion subgroup).

   Baker's method, improved in \cite{Gall}, gives us a very large bound $\log |x| \leqslant 1.53106\cdot 10^{489}$.
    Every integral point $P$ on the curve $C_6$ can be expressed as $P-\infty = n_1D_1+n_2D_2$ with norm $||(n_1, n_2)|| \leqslant 1.2203552 \cdot 10^{245} =:N$, by the Corollary 3.2 of \cite{Gall}. Proposition 6.2 from the same paper gives us an estimate of the precision we need for the computations that will follow. This bound is $\displaystyle \left(\frac 15(48\sqrt{r}Nt+12\sqrt{r}N+5N+48)\right)^{(r+4)/4} \approx 3.25\cdot 10^{369}$. We need a constant $K$ larger than this and we chose $K=10^{750}$, and the computations were done with $1000$ digits of precision.

     The hyperelliptic logarithms of the base divisors are given by
     \begin{align*}
     \varphi (D_1) &= (-0.57355...-i1.292539..., -0.337441...+i0.979713...) \\
      \varphi(D_2) &= (-0.09728...+i0.691157..., -0.40469...-i2.809269...).
      \end{align*}
  The chosen $K$ reduces the bound on the norm of the coefficients to $129.97...$. We then repeat the reduction process with $K=10^{10}$ and this reduces the bound on $||(n_1, n_2)||$ to $17.9141...$, which is sufficiently low for the simple search.  Now we just compute all possible expressions of the form $n_1D_1+n_2D_2$ where $||(n_1, n_2)|| \leqslant 17.92$. This shows that the only integral points on the curve $C_6$ are \[ \{ \infty, (0, -4), (0, 4), (-2, -4), (-2, 4), (-6, -4), (-6, 4), (8, -396), (8, 396)\}. \]
  Since $x=8k$, returning to the original factor $1 + 72 k + 768 k^2 + 2816 k^3 + 4096 k^4 + 2048 k^5$, we see that it is a square only for $k=0, 1$.

\section*{Acknowledgements}
N.~A.~and A.~F.~were supported by the Croatian Science Foundation under the project no.~IP-2018-01-1313.

Y.~F.~is supported by JSPS KAKENHI Grant Number 16K05079.


%
Faculty of Civil Engineering, University of Zagreb, Fra Andrije
Ka\v{c}i\'{c}a-Mio\v{s}i\'{c}a 26, 10000 Zagreb, Croatia \\
Email: nadzaga@grad.hr \\[6pt]
Faculty of Civil Engineering, University of Zagreb, Fra Andrije
Ka\v{c}i\'{c}a-Mio\v{s}i\'{c}a 26, 10000 Zagreb, Croatia \\
Email: filipin@grad.hr \\[6pt]
Department of Mathematics, College of Industrial Technology, Nihon
University, 2-11-1 Shin-ei, Narashino, Chiba, Japan \\
Email: fujita.yasutsugu@nihon-u.ac.jp\\[6pt]

\end{document}